\documentclass[12pt]{amsart}

\usepackage[T1]{fontenc}
\usepackage{amsmath,amssymb,amsthm,mathtools}
\usepackage[margin=1in]{geometry}
\usepackage{enumitem}
\usepackage{microtype}
\usepackage[hidelinks]{hyperref}

\newcommand{\Bc}{\mathcal{B}}

\theoremstyle{plain}
\newtheorem{theorem}{Theorem}[section]
\newtheorem{proposition}[theorem]{Proposition}
\newtheorem{lemma}[theorem]{Lemma}

\theoremstyle{definition}

\theoremstyle{remark}
\newtheorem{remark}[theorem]{Remark}

\title[Log-concavity and unimodality for Hilbert schemes of points]
      {Log-concavity and unimodality of Hodge numbers\\
       of Hilbert schemes of points over a Surface}
\author{Anubhab Pahari}
\address{Department of Mathematics, IIT Madras, Chennai, India}
\email{anubhabpahari@gmail.com, ma22d012@smail.iitm.ac.in}
\subjclass[2020]{05A20, 14C05, 14C30}
\keywords{Hilbert scheme of points, Hodge numbers, log-concavity,
unimodality}

\begin{document}

\begin{abstract}
Let \(S\) be a smooth projective complex surface with irregularity
\(q=h^{1,0}(S)\) and geometric genus \(g=h^{2,0}(S)\), and let
\(S^{[n]}\) denote its Hilbert scheme of \(n\) points.  We prove that,
for every \(n\ge0\), the sequence
\[
\left(h^{p,0}\bigl(S^{[n]}\bigr)\right)_{p=0}^{2n}
\]
is log-concave if and only if
\(g\le\binom{q+1}{2}\).  Moreover, log-concavity of all these sequences
is already equivalent to log-concavity of the sequence for \(n=2\).
We also prove that these sequences are unimodal for every \(n\ge0\) if
and only if \(q\ge1\) or \(g=0\), and that this condition is already
detected by the sequence for \(n=1\).
\end{abstract}
\maketitle

\section{Introduction}\label{sec:introduction}

A finite or infinite sequence \((u_j)\) of nonnegative real numbers is
\emph{log-concave} if
\[
 u_j^2\ge u_{j-1}u_{j+1}
\]
whenever the three terms occur.  A finite sequence
\((v_0,\ldots,v_D)\) is \emph{unimodal} if there is an index \(\mu\)
such that
\[
 v_0\le\cdots\le v_\mu\ge\cdots\ge v_D.
\]
Every finite positive log-concave sequence is unimodal, since its
consecutive ratios are nonincreasing.  We refer to
\cite{Stanley,Brenti} for
background on these properties.

Let \(S\) be a smooth projective surface over \(\mathbb C\), and let
\(S^{[n]}\) denote the Hilbert scheme of points of \(S\) of length
\(n\).  Fogarty proved that \(S^{[n]}\) is smooth of complex dimension
\(2n\) \cite[Theorem~2.4]{Fogarty}.  Put
\[
 q:=h^{1,0}(S),
 \qquad
 g:=h^{2,0}(S),
\]
the irregularity and geometric genus of \(S\), respectively.  A
specialization of G\"ottsche's formula
\cite[Proposition~3.3(a)]{Gottsche}(see \cite{GottscheSoergel}, for general case) gives
\[
 \sum_{n\ge0}\sum_{p\ge0}
 h^{p,0}\bigl(S^{[n]}\bigr)x^pt^n
 =
 \frac{(1+xt)^q}{(1-t)(1-x^2t)^g}.
\]
Thus the Hodge rows
\[
 \left(h^{p,0}\bigl(S^{[n]}\bigr)\right)_{p=0}^{2n}
\]
depend only on \(q\) and \(g\).  Our aim is to determine exactly when
all these rows are log-concave or unimodal.

Sequences arising from Hilbert schemes have previously been studied
primarily by varying \(n\).  For example, suppose that the \emph{topological Euler characteristic}
\(\chi_{\mathrm{top}}(S)=k\ge0\), and let \(p_k(n)\) denote the number
of \(k\)-coloured partitions of \(n\).  G\"ottsche's formula for $\chi_{\mathrm{top}}(S^{[n]})$ \cite[Theorem 0.1 (2)]{Gottsche} gives
\[
 \chi_{\mathrm{top}}\bigl(S^{[n]}\bigr)
 =
 p_k(n).
\]
For \(k\ge3\), the log-concavity of \(\bigl(p_k(n)\bigr)_{n\ge0}\) was
proved in \cite{Bringmann1} and subsequently strengthened in
\cite{Bringmann2} to
\[
 p_k(n)^2>p_k(n-1)p_k(n+1)
 \qquad(n\ge1),
\]
with the single exception \((k,n)=(3,1)\).  Related asymptotic and
distributional questions for Betti and Hodge numbers of Hilbert
schemes were studied in
\cite{BringmannManschot,ManschotZapataRolon,GillmanEtAl}.  In contrast,
we fix \(n\) and vary the holomorphic degree \(p\).

Our first result gives an exact numerical criterion for log-concavity.

\begin{theorem}\label{thm:main-hodge}
The following conditions are equivalent:
\begin{enumerate}[label=\textup{(\roman*)}]
\item \(g\le\binom{q+1}{2}\);
\item for every \(n\ge0\), the sequence
      \[
       \left(h^{p,0}\bigl(S^{[n]}\bigr)\right)_{p=0}^{2n}
      \]
      is log-concave;
\item
      \[
       \bigl(h^{1,0}(S^{[2]})\bigr)^2
       \ge
       h^{0,0}(S^{[2]})h^{2,0}(S^{[2]}).
      \]
\end{enumerate}
\end{theorem}
The threshold is sharp.  If \(S\) is a K3 surface, then \((q,g)=(0,1)\), and the row for \(n=2\) is $(1,0,1,0,1),$ which is not log-concave. 

The proof of Theorem \ref{thm:main-hodge} mainly rests on a reformulation of $h^{p,0}(S^{[n]})$ given by the sum $T_m(W)$ (see Lemma \ref{lem:two-coefficient-forms}): For nonnegative integers \(q\) and \(g\),
define \(c_j\) and \(T_m(W)\) by
\[
 \sum_{j\ge0}c_jz^j=(1-z)^{-g},
 \qquad
 T_m(W):=\sum_{r\ge0}\binom q{m-2r}c_{W+r},
\]
where \(W,m\ge0\) and
\(\binom qk=0\) unless \(0\le k\le q\).
Theorem~\ref{thm:universal-tail} establishes the equivalence
\[
 g\le\binom{q+1}{2}
 \quad\Longleftrightarrow\quad
 T_m(W)^2\ge T_{m-1}(W)T_{m+1}(W)
 \quad\text{for every }W\ge0\text{ and }m\ge1.
\]

The condition in Theorem~\ref{thm:main-hodge} holds for many standard
classes of surfaces \cite{BHPV}.  It holds for every surface with
\(g=0\) and, in particular, for rational, Enriques, ruled, and
bielliptic surfaces.  It also holds for abelian surfaces.  If
\(S=C_r\times C_s\), where \(C_r\) and \(C_s\) are curves of genus
\(r\) and \(s\), then \(q=r+s\), \(g=rs\), and
\[
 \binom{q+1}{2}-g
 =
 \binom{r+1}{2}+\binom{s+1}{2}
 \ge0.
\]
Note that, \(q\) and \(g\) are birational invariants of smooth projective surfaces, so the condition is unchanged by blowing up points.

Although log-concavity can fail above this threshold, unimodality is
governed by a different and simpler condition.

\begin{theorem}\label{thm:unimodality-intro}
The following conditions are equivalent:
\begin{enumerate}[label=\textup{(\roman*)}]
\item \(q\ge1\) or \(g=0\);
\item for every \(n\ge0\), the sequence
      \[
       \left(h^{p,0}\bigl(S^{[n]}\bigr)\right)_{p=0}^{2n}
      \]
      is unimodal;
\item the sequence
      \[
       \left(h^{p,0}\bigl(S^{[1]}\bigr)\right)_{p=0}^{2}
      \]
      is unimodal.
\end{enumerate}
\end{theorem}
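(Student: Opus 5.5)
The implications (ii)\(\Rightarrow\)(iii) and (iii)\(\Leftrightarrow\)(i) are short, so I will do them first. Then I will prove (i)\(\Rightarrow\)(ii) by splitting each row at \(p=n\) and treating the two halves separately.

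\textbf{Steps 1 and 2: the easy implications.} (ii)\(\Rightarrow\)(iii) is the case \(n=1\). For (iii)\(\Leftrightarrow\)(i), extract the coefficient of \(t^1\) in G\"ottsche's formula. This gives the row \((1,q,g)\) for \(n=1\).
\begin{itemize}
\item If \(q=0\) and \(g\ge1\), the row is \((1,0,g)\), which has an interior dip and is not unimodal.
\item If \(q\ge1\), then \(1\le q\), and \((1,q,g)\) is unimodal whatever the relative size of \(q\) and \(g\).
\item If \(g=0\), the row \((1,q,0)\) is unimodal.
\end{itemize}

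\textbf{Step 3: the explicit formula.} For (i)\(\Rightarrow\)(ii), expand the generating function. Take \(x^at^a\) from \((1+xt)^q\), \(x^{2r}t^r\) with coefficient \(c_r\) from \((1-x^2t)^{-g}\), and the remaining powers of \(t\) from \((1-t)^{-1}\). This gives
\[
 h_p:=h^{p,0}\bigl(S^{[n]}\bigr)=\sum_{\substack{a+2r=p\\ a+r\le n}}\binom qa c_r .
\]
This should match the \(T_m(W)\) reformulation of Lemma~\ref{lem:two-coefficient-forms}, and I will use whichever form is more convenient.

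\textbf{Step 4: the case \(g=0\).} Here \(c_r=0\) for \(r\ge1\), so \(h_p=\binom qp\) for \(p\le n\) and \(h_p=0\) otherwise. This is an initial segment of a binomial row followed by zeros, which is unimodal.

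\textbf{Step 5: \(q\ge1\) and small \(g\).} If \(q\ge1\) and \(g\le\binom{q+1}2\), Theorem~\ref{thm:main-hodge} gives log-concavity. One must also check there are no internal zeros. For \(g\ge1\), the terms with \(a\in\{0,1\}\) show \(h_p>0\) for every \(0\le p\le 2n\). Log-concavity without internal zeros then implies unimodality.

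The remaining case, \(q\ge1\) and \(g>\binom{q+1}2\), needs a direct argument. Note that Step 6 below uses only \(q\ge1\), so it covers all cases with \(q\ge1\) uniformly.

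\textbf{Step 6: the increasing half.} For \(p\le n\), the constraint \(a+r\le n\) is automatic. So \(h_p\) is the coefficient of \(x^p\) in
\[
 F(x)=\frac{(1+x)^q}{(1-x^2)^g}.
\]
Since \(q\ge1\) and we may assume \(g\ge1\),
\[
 F(x)=\frac{1}{1-x}\cdot\frac{(1+x)^{q-1}}{(1-x^2)^{g-1}} .
\]
The second factor has nonnegative coefficients. Multiplying by \(1/(1-x)\) takes partial sums, so the coefficients of \(F\) are nondecreasing. Hence \(h_0\le h_1\le\cdots\le h_n\).

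\textbf{Step 7: the decreasing half, the main obstacle.} It remains to show \(h_n\ge h_{n+1}\ge\cdots\ge h_{2n}\), and I expect this to be the hardest step. Put \(p=2n-j\). The constraint \(a+r\le n\) becomes \(r\ge n-j+\tfrac{j-a}{2}\cdot0+\ldots\); more cleanly, it forces \(a\le j\). Writing \(a=j-2s\), one gets \(r=n-j+s\) and
\[
 h_{2n-j}=\sum_{s\ge0}\binom q{j-2s}c_{n-j+s}.
\]
This is exactly a sum of the shape \(T_j(n-j)\).

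I then need \(T_{j}(n-j)\ge T_{j-1}(n-j+1)\) for \(1\le j\le n\). My first attempt will be to pair terms by the same exponent \(a\) of the binomial and compare the resulting \(c\)'s. If that does not suffice, I will use the recurrence \(c_{W+1}=c_W\cdot\frac{g+W}{W+1}\) and the identity \(\binom q{a}=\binom{q-1}{a}+\binom{q-1}{a-1}\), which uses \(q\ge1\). This should split \(T_j(W)\) into pieces that dominate the corresponding pieces of \(T_{j-1}(W+1)\).

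If this direct comparison fails, the fallback is to treat the tail monotonicity by the same partial-sum trick as in Step 6, applied to the reversed generating series in \(t\).
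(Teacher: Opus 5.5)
\textbf{Verdict: the proposal has a genuine gap.} Steps 1--6 are sound; Step~6 is essentially the paper's argument for the left half, via the factor $1/(1-x)$. But Step~7 sets out to prove a false statement.

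The right half of a row is \emph{not} nonincreasing in general. Your own Step~2 row $(1,q,g)$ has $h_1<h_2$ whenever $g>q$. For $q=1$, $g=2$, $n=2$ the row is $(1,1,2,2,3)$. Concretely, your target $T_j(n-j)\ge T_{j-1}(n-j+1)$ at $j=1$ reads $q\,c_{n-1}\ge c_n$, i.e.\ $n(q-1)\ge g-1$. In the remaining case $g>\binom{q+1}{2}\ge q$, this already fails at $n=1$. So neither the term-by-term pairing nor the ``reversed series'' fallback can work: any argument that the right half is monotone must break somewhere.

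What you actually need is that the differences $h_{p+1}-h_p$ for $n\le p\le 2n-1$ change sign at most once, from $\ge0$ to $<0$. The peak may sit anywhere in $[n,2n]$.

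The paper gets this single-crossing property as follows.
\begin{enumerate}
\item Multiplying the generating series by $1-x$ gives $h_{2n-m+1}-h_{2n-m}=\mathcal N_m-\mathcal D_m$. Here $\mathcal N_m=\sum_{t\ge1}\binom{q-1}{m+1-2t}e_{n-m+t}$ with $\sum_j e_jz^j=(1-z)^{1-g}$, and $\mathcal D_m=\binom{q-1}{m}c_{n-m}$.
\item For $1\le m<\min\{n,q-1\}$ (and $g\ge2$), it proves $\mathcal N_{m+1}\mathcal D_m\ge\mathcal N_m\mathcal D_{m+1}$. Each term of $\mathcal N_m$ is matched with a term of $\mathcal N_{m+1}$ that is at least $\mathcal D_{m+1}/\mathcal D_m$ times as large.
\item It notes that $\mathcal D_m=0$ for $m\ge q$.
\item Hence $\{m:\mathcal N_m\ge\mathcal D_m\}$ is a final segment of $\{1,\dots,n\}$, which is exactly the single-crossing property.
\end{enumerate}
Some single-crossing argument of this kind is the missing idea in your proposal.
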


Here necessity is already visible for \(n=1\), since
\(S^{[1]}\cong S\) and its Hodge row is \((1,q,g)\).  If \(q=0\) and
\(g>0\), this row is not unimodal (see remark \ref{rem:q-zero-unimodality}).

We conclude with an outline of the proofs.  In
Section~\ref{sec:coefficients}, we extract an explicit coefficient
formula from G\"ottsche's generating series and reduce log-concavity to some universal inequalities.  Section~\ref{sec:tail-proof} proves
these inequalities and Theorem~\ref{thm:main-hodge}.
Section~\ref{sec:unimodality} studies the first differences of the
Hodge rows and proves Theorem~\ref{thm:unimodality-intro}.

\section{Coefficient formulas and the universal-tail reduction}
\label{sec:coefficients}
The $(p,r)^{\text{th}}$ \emph{Hodge number} of a smooth projective complex variety $X$ of dimension $d$ is defined as $$h^{p,r}(X)= \dim_{\mathbb{C}}H^r(X, \Omega^p_X),\;\; 0\le p,r \le d;$$ where $\Omega^p_X$ is the sheaf of holomorphic $p$-forms.

Set for $r=0$
\[
 a_p^{(n)}:=h^{p,0}\bigl(S^{[n]}\bigr)
 \qquad(n,p\ge0);
\]
thus \(a_p^{(n)}=0\) when \(p>2n\).
G\"ottsche's formula \cite[Proposition~3.3(a)]{Gottsche} gives
\begin{equation}\label{eq:master-series}
 \sum_{n\ge0}\sum_{p\ge0}a_p^{(n)}x^pt^n
 =\frac{(1+xt)^q}{(1-t)(1-x^2t)^g}.
\end{equation}
Define the coefficients \(c_j\) by
\begin{equation}\label{eq:c-generating}
 \sum_{j\ge0}c_jz^j=(1-z)^{-g},
 \qquad c_j:=0\quad(j<0).
\end{equation}
Thus, for \(j\ge0\),
\[
 c_j=\binom{g+j-1}{j}\quad(g\ge1),
\]
whereas \(c_0=1\) and \(c_j=0\) for \(j>0\) when \(g=0\).
Throughout, we use the convention
\[
 \binom{A}{k}=0\qquad\text{unless }0\le k\le A
\]
for every nonnegative integer \(A\) and every integer \(k\).

\subsection{The coefficient formula}

\begin{lemma}\label{lem:coefficient-formula}
For \(n\ge0\) and \(0\le p\le2n\),
\begin{equation}\label{eq:coefficient-formula}
 a_p^{(n)}=
 \sum_{\substack{0\le i\le\min\{q,p,2n-p\}\\
                   i\equiv p\pmod2}}
 \binom qi\,c_{(p-i)/2}.
\end{equation}
\end{lemma}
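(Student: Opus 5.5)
We extract the coefficient of \(x^pt^n\) from the right-hand side of \eqref{eq:master-series} by expanding each factor as a power series in \(t\) and \(x\). We have
\[
 (1+xt)^q=\sum_{i=0}^{q}\binom qi x^it^i,\qquad
 \frac1{1-t}=\sum_{k\ge0}t^k,\qquad
 (1-x^2t)^{-g}=\sum_{j\ge0}c_j x^{2j}t^j,
\]
where the last identity is \eqref{eq:c-generating} with \(z=x^2t\). Multiplying, the monomial \(x^pt^n\) arises exactly from triples \((i,k,j)\) with \(i+2j=p\) and \(i+k+j=n\), \(0\le i\le q\), \(k,j\ge0\). Hence
\[
 a_p^{(n)}=\sum_{(i,j,k)}\binom qi c_j ,
\]
where the sum runs over all such triples.

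Next we eliminate \(j\) and \(k\). The condition \(i+2j=p\) with \(j\ge0\) forces \(i\equiv p\pmod 2\), \(i\le p\), and \(j=(p-i)/2\). Then \(k=n-i-j=n-(p+i)/2\), and \(k\ge0\) is equivalent to \(i\le 2n-p\). Because \(k\) is uniquely determined and imposes no weight, each admissible \(i\) contributes exactly \(\binom qi c_{(p-i)/2}\).

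The admissible \(i\) are therefore \(0\le i\le\min\{q,p,2n-p\}\) with \(i\equiv p\pmod2\), which is precisely \eqref{eq:coefficient-formula}. The range restriction \(i\le q\) is automatic from the binomial convention, but it is harmless to keep it explicit.

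The only point needing care is the case \(g=0\), where \(c_j\) vanishes for \(j>0\). This case is covered uniformly, because \eqref{eq:c-generating} holds for \(g=0\) as well. There is no real obstacle; the main step is just checking that the constraint \(k\ge0\) translates exactly into \(i\le 2n-p\).
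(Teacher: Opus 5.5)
Your proposal is correct and is essentially the paper's argument. The paper likewise pairs $x^it^i$ from $(1+xt)^q$ with $x^{2j}t^j$ from $(1-x^2t)^{-g}$ and lets the factor $(1-t)^{-1}$ absorb the slack $k=n-i-j\ge0$, which becomes $i\le 2n-p$ once $j=(p-i)/2$.
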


\begin{proof}
Choose \(x^it^i\) from \((1+xt)^q\) and \(x^{2j}t^j\) from
\((1-x^2t)^{-g}\).  These choices contribute to the coefficient of
\(x^pt^n\) precisely when
\[
 i+2j=p,
 \qquad
 i+j\le n,
\]
and their contribution is \(\binom qi c_j\).  Substituting
\(j=(p-i)/2\) gives the parity condition on \(i\), while \(i+j\le n\)
is equivalent to \(i\le2n-p\).  This proves
\eqref{eq:coefficient-formula}.
\end{proof}

Formula~\eqref{eq:coefficient-formula} also shows that if \(q,g\ge1\), then
\(a_p^{(n)}>0\) for every \(0\le p\le2n\): in
\eqref{eq:coefficient-formula}, the term with \(i=0\) when \(p\) is even,
or with \(i=1\) when \(p\) is odd, is admissible and positive.

\begin{remark}[The case \(g=0\)]\label{rem:g-zero}
If \(g=0\), then \eqref{eq:coefficient-formula} reduces to
\[
 a_p^{(n)}=
 \begin{cases}
  \binom qp,&0\le p\le n,\\
  0,&n<p\le2n.
 \end{cases}
\]
Thus the row is an initial segment of a binomial row followed by zeros,
and hence is log-concave and unimodal.
\end{remark}

\subsection{Reduction to a universal inequality}

When \(g\ge1\), equation~\eqref{eq:c-generating} gives
\begin{equation}\label{eq:c-ratio}
 \frac{c_{j+1}}{c_j}
 =\frac{g+j}{j+1}
 =1+\frac{g-1}{j+1}
 \qquad(j\ge0).
\end{equation}
Consequently, \((c_j)_{j\ge0}\) is positive and nondecreasing, and its
consecutive ratios are nonincreasing; in particular, it is log-concave.

For arbitrary \(g\ge0\) and \(W,m\ge0\), define the shifted sums
\begin{equation}\label{eq:T-definition}
 T_m(W):=\sum_{r\ge0}\binom q{m-2r}c_{W+r}.
\end{equation}
Separating the term \(r=0\) and shifting the remaining index gives the
useful identity
\begin{equation}\label{eq:T-shift}
 T_{m+1}(W)
 =\binom q{m+1}c_W+T_{m-1}(W+1)
 \qquad(m\ge1,\ W\ge0).
\end{equation}

The following formula identifies the two halves of each Hodge row with
these shifted sums.

\begin{lemma}\label{lem:two-coefficient-forms}
For \(n\ge0\) and \(0\le p\le2n\),
\begin{equation}\label{eq:hodge-to-T}
 a_p^{(n)}=
 \begin{cases}
  T_p(0),&0\le p\le n,\\
  T_{2n-p}(p-n),&n\le p\le2n.
 \end{cases}
\end{equation}
The two expressions agree when \(p=n\).
\end{lemma}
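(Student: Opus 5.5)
The plan is to start from Lemma~\ref{lem:coefficient-formula} and reindex the sum by $r$, where $i=p-2r$ in the first case and $i=2n-p-2r$ in the second. In each case we identify which constraint in $0\le i\le\min\{q,p,2n-p\}$ is binding. We then check that the remaining constraints are automatically enforced by the binomial convention $\binom qk=0$ unless $0\le k\le q$.

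\emph{Case $0\le p\le n$.} Here $p\le 2n-p$, so the constraints reduce to $0\le i\le\min\{q,p\}$ with $i\equiv p\pmod 2$. Write $i=p-2r$ with $r\ge0$. Then $c_{(p-i)/2}=c_r$, and the parity condition becomes automatic. The condition $i\le p$ is equivalent to $r\ge0$. The conditions $0\le i\le q$ are exactly those under which $\binom q{p-2r}$ is nonzero. Hence
\[
 a_p^{(n)}=\sum_{r\ge0}\binom q{p-2r}c_{r}=T_p(0).
\]

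\emph{Case $n\le p\le 2n$.} Now $2n-p\le p$, so the constraints are $0\le i\le\min\{q,2n-p\}$ with $i\equiv p\equiv 2n-p\pmod 2$. Write $i=2n-p-2r$ with $r\ge0$. Then
\[
 \frac{p-i}{2}=\frac{2p-2n+2r}{2}=(p-n)+r,
\]
so the summand is $\binom q{2n-p-2r}c_{(p-n)+r}$. As before, $r\ge0$ encodes $i\le 2n-p$, and the binomial convention encodes $0\le i\le q$. This gives $a_p^{(n)}=T_{2n-p}(p-n)$.

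\emph{Agreement at $p=n$.} Both expressions reduce to $T_n(0)$. Alternatively, this is immediate because both equal $a_n^{(n)}$.

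The only point needing care is the bookkeeping of index ranges. We must make sure the truncation $i\le\min\{p,2n-p\}$ turns into exactly $r\ge0$ and does not cut off any additional terms. This works because the smaller of $p$ and $2n-p$ is the one used as the base index in each case. We must also check the edge case $g=0$, where $c_j=0$ for $j>0$; this is harmless, since the identities hold term by term. There is no substantial obstacle.
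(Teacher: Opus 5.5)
Your proposal is correct and follows the paper's proof essentially verbatim. Like the paper, you reindex the sum in Lemma~\ref{lem:coefficient-formula} by $i=p-2r$ when $p\le n$, where the constraint $i\le 2n-p$ is redundant, and by $i=2n-p-2r$ when $p\ge n$, using $(p-i)/2=p-n+r$; your explicit check that the binomial convention absorbs the constraints $0\le i\le q$ simply spells out what the paper leaves implicit.
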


\begin{proof}
If \(p\le n\), then the restriction \(i\le2n-p\) in
\eqref{eq:coefficient-formula} is redundant; setting \(i=p-2r\) gives
\(a_p^{(n)}=T_p(0)\).  If \(p\ge n\), set \(m=2n-p\) and then
\(i=m-2r\) in \eqref{eq:coefficient-formula}.  Since
\[
 \frac{p-i}{2}=p-n+r,
\]
we obtain \(a_p^{(n)}=T_{2n-p}(p-n)\).
\end{proof}

The first three shifted sums are
\[
 T_0(0)=1,
 \qquad
 T_1(0)=q,
 \qquad
 T_2(0)=\binom q2+g.
\]
Hence the inequality at \(W=0\) and \(m=1\) is precisely
\[
 T_1(0)^2\ge T_0(0)T_2(0)
 \quad\Longleftrightarrow\quad
 g\le\binom{q+1}{2}.
\]
The content of the following theorem is that this first necessary
inequality already implies all the shifted inequalities.

\begin{theorem}\label{thm:universal-tail}
Let \(q,g\ge0\) be integers.  The following conditions are equivalent:
\begin{enumerate}[label=\textup{(\roman*)}]
\item \(g\le\binom{q+1}{2}\);
\item for every \(W\ge0\) and \(m\ge1\),
      \begin{equation}\label{eq:universal-tail}
       T_m(W)^2\ge T_{m-1}(W)T_{m+1}(W).
      \end{equation}
\end{enumerate}
\end{theorem}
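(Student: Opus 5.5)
The implication (ii)\(\Rightarrow\)(i) is immediate. Taking \(W=0\) and \(m=1\) in \eqref{eq:universal-tail} and using \(T_0(0)=1\), \(T_1(0)=q\), \(T_2(0)=\binom q2+g\) gives \(q^2\ge\binom q2+g\), which is exactly \(g\le\binom{q+1}{2}\).

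For (i)\(\Rightarrow\)(ii) I would work with generating functions. Put \(e_r:=c_{W+r}\) and \(E_W(y):=\sum_{r\ge0}e_ry^r\). Then \(T_m(W)\) is the coefficient of \(x^m\) in
\[
P_W(x):=(1+x)^qE_W(x^2).
\]
So the claim is that \(P_W\) has a log-concave coefficient sequence for every \(W\). The case \(g=0\) is Remark~\ref{rem:g-zero}, so assume \(g\ge1\). By \eqref{eq:c-ratio}, \((e_r)\) is positive with nonincreasing ratios \(\rho_r:=e_{r+1}/e_r\ge1\). Moreover \(\rho_0=(g+W)/(W+1)\le g\). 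Hence, for each \(W\), hypothesis (i) gives
\[
e_1\le\tbinom{q+1}{2}\,e_0 .
\]
This reduces the theorem to the following general lemma, which is what I would actually prove.

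\emph{Lemma.} If \((e_r)\) is positive and log-concave with \(e_1\le\binom{q+1}2e_0\), then the coefficients of \((1+x)^qE(x^2)\) form a log-concave sequence.

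The difficulty is that \(E(x^2)\) has internal zeros. So one cannot simply invoke the fact that convolution with the binomial row preserves log-concavity of sequences without internal zeros. My plan is to split \((1+x)^q\) by parity: write \((1+x)^q=A(x^2)+xB(x^2)\), where \(A\) and \(B\) collect the even- and odd-index binomial coefficients. Then
\[
T_{2k}=\sum_{i}\tbinom q{2i}e_{k-i},\qquad T_{2k+1}=\sum_i\tbinom q{2i+1}e_{k-i}.
\]
Both are convolutions of log-concave sequences, hence individually log-concave in \(k\). The inequalities to prove are the mixed ones, \(T_{2k}^2\ge T_{2k-1}T_{2k+1}\) and \(T_{2k+1}^2\ge T_{2k}T_{2k+2}\). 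Expanding each difference as a double sum \(\sum_{i,j}(\ldots)e_{k-i}e_{k-j}\) and symmetrizing in \((i,j)\), I would pair terms so that every pair is of the form (binomial coefficient of a Turán type)\(\times\)(\(e_ae_b-e_{a-1}e_{b+1}\ge0\) for \(a\le b\)). The binomial Turán-type factors come from the log-concavity of \(\binom q\cdot\), as in the standard proof that convolutions preserve log-concavity. What remains are the boundary terms involving \(e_0,e_1\). There the unmatched contributions should collapse to a positive multiple of \(\binom{q+1}{2}e_0-e_1\), or to something dominated by it after using \(\rho_r\le\rho_0\).

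A useful simplification: multiplying a sequence with no internal zeros by \((1+x)\) preserves log-concavity. Hence if the lemma holds for \(q\) it holds for all \(q'\ge q\) with the same \(e\). So it suffices to treat, for each \(e\), the least \(q\) with \(e_1\le\binom{q+1}2e_0\). Alternatively, one can argue inductively via \eqref{eq:T-shift}, passing from \(W\) to \(W+1\).

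The main obstacle is the boundary bookkeeping in the mixed inequalities, especially the odd-centred one \(T_{2k+1}^2\ge T_{2k}T_{2k+2}\) for small \(k\). That is where the threshold \(\binom{q+1}2\) must enter sharply: at \(k=0\) it is precisely the hypothesis. If the direct pairing becomes messy, I would first try to verify the needed quadratic inequality in the extremal case. In that case \(e\) is geometric from index \(1\) on with ratio \(\rho_0\), i.e.\ \(e_r=e_1\rho_0^{\,r-1}\), which is the worst case allowed by nonincreasing ratios. I would then reduce the general case to it by a monotonicity argument in the ratios.
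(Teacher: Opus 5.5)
Your argument for (ii)$\Rightarrow$(i) is correct. The route to (i)$\Rightarrow$(ii), however, goes through a lemma that is false, so it cannot be completed.

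Take $q=2$ and $e_r=2^r$. This sequence is positive and log-concave with constant ratio $\ge1$, and $e_1=2\le3=\binom{3}{2}e_0$. Yet the coefficients of $(1+x)^2E(x^2)$ are $1,2,3,4,6,8,12,\dots$, and $4^2<3\cdot6$.

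The failure is general. If $e_r=\rho^r$ with $\rho>1$ and $x=\rho^{-1/2}$, then for $k\ge\lfloor q/2\rfloor$ one has $T_{2k}=\rho^k\mathcal E_q(x)$, $T_{2k+1}=\rho^kx^{-1}\mathcal O_q(x)$ and $T_{2k+2}=\rho^{k+1}\mathcal E_q(x)$. Hence
\[
 T_{2k+1}^2-T_{2k}T_{2k+2}
 =\rho^{2k+1}\bigl(\mathcal O_q(x)^2-\mathcal E_q(x)^2\bigr)
 =-\rho^{2k+1}\bigl(1-\rho^{-1}\bigr)^q<0.
\]
For $q\ge2$, every $1<\rho\le\binom{q+1}{2}$ meets all hypotheses of your lemma.

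So the geometric sequence you single out as the extremal case is itself a counterexample. The trouble is also not boundary bookkeeping. For $k\ge\lfloor q/2\rfloor+2$, neither $e_0$ nor $e_1$ occurs in $T_{2k},T_{2k+1},T_{2k+2}$, and every factor $e_ae_b-e_{a-1}e_{b+1}$ in your pairing vanishes. What remains is the purely binomial residual $\mathcal O_q^2-\mathcal E_q^2=-(1-x^2)^q$, which has the wrong sign. The even-centred inequality does survive under your hypotheses: Proposition~\ref{prop:even-centres} uses only that $(c_{W+r})_r$ is positive, nondecreasing and log-concave. The defect is confined to odd centres.

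The missing ingredient is quantitative. The ratios $c_{j+1}/c_j=1+(g-1)/(j+1)$ must decrease fast enough to pay for this deficit, and that is exactly what you discard by keeping only log-concavity and $e_1\le\binom{q+1}{2}e_0$.

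In the paper this enters in the case $m=2h-1\ge3$. The inequality is rewritten as $(N-1)V^2\ge NZ\Sigma$, and the weights are compared with the geometric reference $\binom Nu x^u$ by matching zeroth and first moments (Lemma~\ref{lem:crossing}). The parity identities then absorb everything except the extra term $w_0=\theta P_1$. That term is controlled by playing two consecutive ratios of $c$ against each other: $\theta LN=c_{A+1}/c_A=(A+g)/(A+1)$ and $y_0=c_{A-1}/c_A=A/(A+g-1)$. The key inequality is $F(y_0)\ge(A+g)/(A+1)$, whose slack is a positive multiple of $(g-1)(\Bc-g)$. If those two consecutive ratios were both equal to some $\rho>1$, this inequality would reduce to $\rho+\rho^{-1}\le2$ and fail.

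Your remark that multiplying by $1+x$ propagates log-concavity from $q$ to $q+1$ is correct. It would reduce the problem to the least admissible $q$ for each $g$, but it does not supply this ratio-decay input.
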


Theorem~\ref{thm:universal-tail} will be proved in
Section~\ref{sec:tail-proof}.  We next explain why its inequalities imply
log-concavity of the Hodge sequence.  The following shift comparison handles
the right half of a row.

\begin{proposition}\label{prop:freezing}
Assume \(g\ge1\).  For every \(W\ge1\) and \(m\ge1\),
\[
 T_{m-1}(W+1)T_{m+1}(W-1)
 \le T_{m-1}(W)T_{m+1}(W).
\]
\end{proposition}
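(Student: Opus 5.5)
The plan is to expand both sides of the inequality as double sums over the index \(r\) of \eqref{eq:T-definition} and compare them term by term. Most terms should cancel in pairs, and the rest should be controlled by the log-concavity of \((c_j)\) from \eqref{eq:c-ratio}. Write \(b_k:=\binom qk\), so that \(T_m(W)=\sum_{r\ge0}b_{m-2r}c_{W+r}\). Then
\[
 D:=T_{m-1}(W)T_{m+1}(W)-T_{m-1}(W+1)T_{m+1}(W-1)
 =\sum_{r,s\ge0}b_{m-1-2r}\,b_{m+1-2s}\,
 \bigl(c_{W+r}c_{W+s}-c_{W+r+1}c_{W+s-1}\bigr).
\]
Since \(W\ge1\), every index \(W+s-1\) is nonnegative, so all the \(c\)'s are the genuine binomial coefficients of \eqref{eq:c-generating}.

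The key step is an involution on the index set \(\{(r,s):r\ge0,\ s\ge1\}\), namely \((r,s)\mapsto(s-1,r+1)\). It is a bijection of this set onto itself. It preserves the coefficient, because
\[
 b_{m-1-2(s-1)}\,b_{m+1-2(r+1)}=b_{m+1-2s}\,b_{m-1-2r}.
\]
It also sends the bracket \(c_{W+r}c_{W+s}-c_{W+r+1}c_{W+s-1}\) to \(c_{W+s-1}c_{W+r+1}-c_{W+s}c_{W+r}\), which is its negative. Hence the contributions from pairs of distinct points cancel. At a fixed point, where \(s=r+1\), the bracket vanishes outright. So the entire part of the sum with \(s\ge1\) contributes zero.

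Only the terms with \(s=0\) remain:
\[
 D=b_{m+1}\sum_{r\ge0}b_{m-1-2r}\bigl(c_{W+r}c_W-c_{W+r+1}c_{W-1}\bigr).
\]
Each bracket is nonnegative. Indeed, \(g\ge1\) makes all \(c_j\) positive, and by \eqref{eq:c-ratio} the ratios \(c_{j+1}/c_j\) are nonincreasing. Therefore \(c_{W+r+1}/c_{W+r}\le c_W/c_{W-1}\) for \(r\ge0\), which is exactly \(c_{W+r+1}c_{W-1}\le c_{W+r}c_W\). The coefficients \(b\) are nonnegative, so \(D\ge0\), which is the claim.

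The only delicate point is the bookkeeping in the involution step. One must check that the map really is a bijection of \(\{r\ge0,\ s\ge1\}\), so that no boundary term is lost. One must also check that the binomial convention (\(b_k=0\) outside \(0\le k\le q\)) is compatible with the coefficient identity, which it is, since the identity is literally an equality of the same two factors. The hypothesis \(g\ge1\) is used only to guarantee positivity of the \(c_j\) and hence the ratio comparison.
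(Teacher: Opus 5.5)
Your proof is correct and is essentially the paper's argument: the involution \((r,s)\mapsto(s-1,r+1)\) is the term-by-term form of the shift identity \eqref{eq:T-shift}, which the paper uses to rewrite the \(s\ge1\) parts of \(T_{m+1}(W)\) and \(T_{m+1}(W-1)\) as \(T_{m-1}(W+1)\) and \(T_{m-1}(W)\), so that they cancel in one line. The surviving \(s=0\) sum \(\binom q{m+1}\sum_{r\ge0}\binom q{m-1-2r}\bigl(c_Wc_{W+r}-c_{W-1}c_{W+r+1}\bigr)\) and the ratio-monotonicity argument for its nonnegativity are identical to the paper's.
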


\begin{proof}
Applying \eqref{eq:T-shift} at \(W\) and \(W-1\), respectively, gives
\[
 \begin{aligned}
 T_{m+1}(W)
   &=T_{m-1}(W+1)+\binom q{m+1}c_W,\\
 T_{m+1}(W-1)
   &=T_{m-1}(W)+\binom q{m+1}c_{W-1}.
 \end{aligned}
\]
Therefore,
\begin{align*}
&T_{m-1}(W)T_{m+1}(W)
 -T_{m-1}(W+1)T_{m+1}(W-1)\\
&\quad=\binom q{m+1}
 \bigl(c_WT_{m-1}(W)-c_{W-1}T_{m-1}(W+1)\bigr)\\
&\quad=\binom q{m+1}
 \sum_{r\ge0}\binom q{m-1-2r}
 \bigl(c_Wc_{W+r}-c_{W-1}c_{W+r+1}\bigr).
\end{align*}
By \eqref{eq:c-ratio}, the consecutive ratios of \(c_j\) are
nonincreasing.  Since \(W-1\le W+r\),
\[
 \frac{c_W}{c_{W-1}}
 \ge \frac{c_{W+r+1}}{c_{W+r}},
\]
so every summand in the last expression is nonnegative.  This proves the
claim.
\end{proof}

\begin{proposition}\label{lem:tail-to-hodge}
Suppose that \eqref{eq:universal-tail} holds for every \(W\ge0\) and
\(m\ge1\).  Then, for every \(n\ge0\), the sequence
\[
 \bigl(a_p^{(n)}\bigr)_{0\le p\le2n}
\]
is log-concave.
\end{proposition}

\begin{proof}
The case \(g=0\) follows from Remark~\ref{rem:g-zero}; hence assume
\(g\ge1\).  There is nothing to prove when \(n=0\).  Fix \(n\ge1\) and
an interior index \(1\le p\le2n-1\).

If \(1\le p\le n-1\), then \eqref{eq:hodge-to-T} gives
\[
 \bigl(a_{p-1}^{(n)},a_p^{(n)},a_{p+1}^{(n)}\bigr)
 =\bigl(T_{p-1}(0),T_p(0),T_{p+1}(0)\bigr),
\]
and the desired inequality is \eqref{eq:universal-tail} with
\((m,W)=(p,0)\).

If \(p=n\), then
\[
 \bigl(a_{n-1}^{(n)},a_n^{(n)},a_{n+1}^{(n)}\bigr)
 =\bigl(T_{n-1}(0),T_n(0),T_{n-1}(1)\bigr).
\]
Moreover, \eqref{eq:T-shift} gives
\[
 T_{n+1}(0)
 =T_{n-1}(1)+\binom q{n+1}c_0
 \ge T_{n-1}(1).
\]
Consequently,
\[
 \bigl(a_n^{(n)}\bigr)^2
 =T_n(0)^2
 \ge T_{n-1}(0)T_{n+1}(0)
 \ge a_{n-1}^{(n)}a_{n+1}^{(n)}.
\]

Finally, suppose that \(n+1\le p\le2n-1\), and put
\[
 m:=2n-p,
 \qquad
 W:=p-n.
\]
Then \(1\le m\le n-1\), \(W\ge1\), and
\[
 \bigl(a_{p-1}^{(n)},a_p^{(n)},a_{p+1}^{(n)}\bigr)
 =\bigl(T_{m+1}(W-1),T_m(W),T_{m-1}(W+1)\bigr).
\]
Using \eqref{eq:universal-tail} and then
Proposition~\ref{prop:freezing}, we obtain
\begin{align*}
 \bigl(a_p^{(n)}\bigr)^2
 &=T_m(W)^2\\
 &\ge T_{m-1}(W)T_{m+1}(W)\\
 &\ge T_{m-1}(W+1)T_{m+1}(W-1)\\
 &=a_{p-1}^{(n)}a_{p+1}^{(n)}.
\end{align*}
These cases exhaust all interior indices, so the row is log-concave.
\end{proof}

\section{Proof of Theorem~\ref{thm:universal-tail} and
Theorem~\ref{thm:main-hodge}}
\label{sec:tail-proof}

Except where stated otherwise, we assume
\begin{equation}\label{eq:tail-standing-assumptions}
 q\ge2,\qquad g\ge1.
\end{equation}

\subsection{An equivalent criterion}

Put
\[
 N:=q+2,\qquad
 L:=N-1=q+1,\qquad
 \Bc:=\binom{q+1}{2}=\binom L2.
\]
For fixed \(W\ge0\), write
\[
 \alpha_r:=c_{W+r}\qquad(r\ge0).
\]
Equation~\eqref{eq:c-ratio} gives
\begin{equation}\label{eq:alpha-ratio}
 \frac{\alpha_{r+1}}{\alpha_r}
 =\frac{g+W+r}{W+r+1}
 =1+\frac{g-1}{W+r+1}.
\end{equation}
Thus \((\alpha_r)_{r\ge0}\) is positive and nondecreasing, and its
consecutive quotients are nonincreasing.

Fix \(m\ge1\).  For \(0\le u\le N\), define
\begin{equation}\label{eq:w-definition}
 w_u:=
 \begin{cases}
  \displaystyle\binom Nu\alpha_{(m+1-u)/2},
   &u\equiv m+1\pmod2\ \text{and }u\le m+1,\\[5pt]
  0,&\text{otherwise},
 \end{cases}
\end{equation}
and set
\begin{equation}\label{eq:moment-sums}
 Z:=\sum_{u=0}^{N}w_u,\qquad
 V:=\sum_{u=0}^{N}u w_u,\qquad
 \Sigma:=\sum_{u=0}^{N}u(u-1)w_u.
\end{equation}

Using \(q=N-2\), we have
\[
 \binom q{u-1}
 =\frac{u(N-u)}{N(N-1)}\binom Nu,\quad
 \binom q{u-2}
 =\frac{u(u-1)}{N(N-1)}\binom Nu,
\]
\[
 \binom qu
 =\frac{(N-u)(N-u-1)}{N(N-1)}\binom Nu.
\]
Substituting \(u=m+1-2r\) in the three sums gives
\begin{equation}\label{eq:T-three-forms}
\begin{aligned}
 N(N-1)T_{m-1}(W)
 &=\Sigma,\\
 N(N-1)T_m(W)
 &=(N-1)V-\Sigma,\\
 N(N-1)T_{m+1}(W)
 &=N(N-1)Z-2(N-1)V+\Sigma.
\end{aligned}
\end{equation}

\begin{proposition}\label{prop:master-identity}
Let \(q,g,W\) be nonnegative integers, and let \(m\ge1\) be an integer.
Then
\begin{equation}\label{eq:master-identity}
\begin{aligned}
 &N^2(N-1)
 \bigl(T_m(W)^2-T_{m-1}(W)T_{m+1}(W)\bigr)\\
 &\hspace{35mm}=(N-1)V^2-NZ\Sigma.
\end{aligned}
\end{equation}
For these fixed \(W\) and \(m\), the inequality
\eqref{eq:universal-tail} is equivalent to
\begin{equation}\label{eq:moment-goal}
 (N-1)V^2\ge NZ\Sigma.
\end{equation}
\end{proposition}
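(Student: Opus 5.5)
The plan is to substitute the three expressions in \eqref{eq:T-three-forms} into the left-hand side of \eqref{eq:master-identity} and expand. Write \(A:=N(N-1)\). Then
\[
 A^2T_m(W)^2=\bigl((N-1)V-\Sigma\bigr)^2,
 \qquad
 A^2T_{m-1}(W)T_{m+1}(W)=\Sigma\bigl(AZ-2(N-1)V+\Sigma\bigr).
\]
Subtracting the second from the first, the \(\Sigma^2\) terms cancel. The two cross terms \(-2(N-1)V\Sigma\) and \(+2(N-1)V\Sigma\) also cancel. What remains is
\[
 A^2\bigl(T_m(W)^2-T_{m-1}(W)T_{m+1}(W)\bigr)=(N-1)^2V^2-N(N-1)Z\Sigma .
\]
Dividing by \(N-1\), which is positive since \(N=q+2\ge2\), gives exactly \eqref{eq:master-identity}, because \(A^2/(N-1)=N^2(N-1)\).

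Before this computation I must check that \eqref{eq:T-three-forms} holds for all nonnegative \(q\) and \(g\), not only under \eqref{eq:tail-standing-assumptions}.

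\emph{The binomial conversions.} For \(N=q+2\) these are standard identities, valid for all integers \(u\) with \(0\le u\le N\). At the boundary values of \(u\) both sides vanish under the stated convention: for example \(\binom q{u-1}=0\) at \(u=0\) and at \(u=N\).

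\emph{The index ranges.} The substitution \(u=m+1-2r\) must hit the correct summands in each sum.

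\begin{itemize}
\item For \(T_{m-1}(W)=\sum_r\binom q{m-1-2r}\alpha_r\), one has \(m-1-2r=u-2\).
\item For \(T_m(W)\), one has \(m-2r=u-1\).
\item For \(T_{m+1}(W)\), one has \(m+1-2r=u\).
\end{itemize}

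The constraint \(u\le N\) loses nothing, since \(\binom q{\cdot}\) vanishes there anyway. The values \(u<0\), which arise when \(r>(m+1)/2\), contribute zero to all three expressions. For \(T_m(W)\) the coefficient \(u(N-u)\) gives \((N-1)u-u(u-1)\), which produces \((N-1)V-\Sigma\). For \(T_{m+1}(W)\), expanding \((N-u)(N-u-1)=N(N-1)-2(N-1)u+u(u-1)\) produces the third line of \eqref{eq:T-three-forms}.

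The case \(g=0\) needs separate attention, because there \(\alpha_r=c_{W+r}\) may vanish. This causes no trouble: every step is a formal identity that is linear in the \(\alpha_r\), with no division by any \(\alpha_r\).

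The equivalence with \eqref{eq:moment-goal} then follows immediately. The factor \(N^2(N-1)\) is strictly positive, so the sign of \(T_m(W)^2-T_{m-1}(W)T_{m+1}(W)\) equals the sign of \((N-1)V^2-NZ\Sigma\).

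The only real obstacle is bookkeeping: confirming that the boundary terms are handled uniformly, namely \(u\) near \(0\) and \(N\), and small \(m\) such as \(m=1\), where \(T_0(W)=\alpha_0\) must correspond to the single term \(u=2\) of \(\Sigma/\bigl(N(N-1)\bigr)\). I would verify that case explicitly as a sanity check.
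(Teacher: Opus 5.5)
Your proposal is correct and is essentially the paper's proof: you substitute the three expressions from \eqref{eq:T-three-forms}, note that the $\Sigma^2$ and $V\Sigma$ terms cancel, leaving $(N-1)\bigl((N-1)V^2-NZ\Sigma\bigr)$, and divide by $N-1>0$. Your extra check that \eqref{eq:T-three-forms} remains valid for all $q,g\ge0$, including the boundary values of $u$, is sound, and it is a reasonable point of care since the paper states the proposition in that generality.
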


\begin{proof}
Substituting \eqref{eq:T-three-forms}, we obtain
\begin{align*}
 &N^2(N-1)^2
 \bigl(T_m(W)^2-T_{m-1}(W)T_{m+1}(W)\bigr)\\
 &\quad=((N-1)V-\Sigma)^2
 -\Sigma\bigl(N(N-1)Z-2(N-1)V+\Sigma\bigr)\\
 &\quad=(N-1)\bigl((N-1)V^2-NZ\Sigma\bigr).
\end{align*}
Division by \(N-1>0\) proves the proposition.
\end{proof}

\subsection{Some useful facts}

A finite nonnegative sequence has \emph{interval support} if its positive
terms occur at consecutive indices.

\begin{lemma}\label{lem:superlevel-interval}
Let \(\lambda_0,\ldots,\lambda_d\) be log-concave with interval support.
For every \(C\ge0\), the set
\[
 \{j:\lambda_j>C\}
\]
is an interval of integers, possibly empty.
\end{lemma}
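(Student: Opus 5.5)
The plan is to show directly that the positive terms of \((\lambda_j)\) form a unimodal sequence; a superlevel set of a unimodal sequence is automatically an interval. If \(C\ge0\) and \(\lambda_j>C\), then \(\lambda_j>0\), so \(\{j:\lambda_j>C\}\) lies inside the support \(J=\{a,a+1,\ldots,b\}\). This support is an interval by hypothesis. If the support is empty, the set is empty and there is nothing to prove. So we restrict attention to the indices \(a\le j\le b\), where every \(\lambda_j\) is strictly positive.

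First, on \(J\) the log-concavity inequalities \(\lambda_j^2\ge\lambda_{j-1}\lambda_{j+1}\) for \(a<j<b\) can be divided by \(\lambda_{j-1}\lambda_j>0\). This gives
\[
 \frac{\lambda_j}{\lambda_{j-1}}\ge\frac{\lambda_{j+1}}{\lambda_j},
\]
so the consecutive ratios \(\rho_j:=\lambda_j/\lambda_{j-1}\), for \(a<j\le b\), are nonincreasing. This is the same observation the introduction uses to deduce unimodality from log-concavity.

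Next, let \(\mu\) be the largest index in \(J\) such that \(\rho_j\ge1\) for all \(a<j\le\mu\); take \(\mu=a\) if \(\rho_{a+1}<1\). Monotonicity of the ratios then gives \(\rho_j<1\) for all \(j>\mu\). Hence \(\lambda_a\le\cdots\le\lambda_\mu\) and \(\lambda_\mu>\lambda_{\mu+1}>\cdots>\lambda_b\).

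Finally, suppose \(i<k<l\) with \(\lambda_i>C\) and \(\lambda_l>C\); we must show \(\lambda_k>C\). All three indices lie in \(J\). If \(k\le\mu\), then \(\lambda_k\ge\lambda_i>C\) because the sequence is nondecreasing on \([a,\mu]\). If \(k\ge\mu\), then \(\lambda_k\ge\lambda_l>C\) because it is nonincreasing on \([\mu,b]\). Hence the set contains every integer between any two of its elements, so it is an interval.

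There is no real obstacle here. The only point needing care is that log-concavity alone does not give unimodality when zeros occur inside the sequence; that is why the interval-support hypothesis is used to reduce to strictly positive terms before dividing.
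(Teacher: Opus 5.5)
Your proof is correct and follows the same route as the paper: on the positive support the consecutive quotients \(\lambda_{j+1}/\lambda_j\) are nonincreasing, so the sequence rises and then falls at most once, and any index lying between two terms exceeding \(C\) also exceeds \(C\). You simply spell out the steps the paper leaves implicit, namely choosing the peak index \(\mu\) and noting that \(C\ge0\) confines the superlevel set to the support.
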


\begin{proof}
On the positive support, the consecutive quotients
\(\lambda_{j+1}/\lambda_j\) are nonincreasing.  Hence the sequence can
change from increasing to decreasing at most once.  Every term between
two terms greater than \(C\) is therefore also greater than \(C\).
\end{proof}

\begin{lemma}\label{lem:crossing}
Let \(d\ge1\), let \(\omega_0,\ldots,\omega_d>0\), and let
\(\lambda_0,\ldots,\lambda_d\ge0\) be log-concave with interval support
\(\{0,1,\ldots,e\}\), where \(1\le e\le d\).  Put
\[
 \pi_j:=\omega_j\lambda_j,
 \qquad
 M(t):=
 \frac{\sum_{j=0}^d j\omega_jt^j}
      {\sum_{j=0}^d\omega_jt^j}
 \quad(t>0),
\]
\[
 \bar\jmath:=
 \frac{\sum_{j=0}^d j\pi_j}
      {\sum_{j=0}^d\pi_j}.
\]
Then:
\begin{enumerate}[label=\textup{(\alph*)}]
\item \(M\) is strictly increasing on \((0,\infty)\), and there is a
      unique \(t>0\) such that \(M(t)=\bar\jmath\).
\item For this \(t\), choose \(\kappa>0\) such that
      \[
       \widetilde\pi_j:=\kappa\omega_jt^j,
       \qquad
       \sum_j\widetilde\pi_j=\sum_j\pi_j,
      \]
      and put \(\Delta_j:=\pi_j-\widetilde\pi_j\).  Then
      \[
       \sum_j\Delta_j=\sum_jj\Delta_j=0,
      \]
      and \(\{j:\Delta_j>0\}\) is an interval.
\item \(\pi_0\le\widetilde\pi_0\).
\item
      \[
       \sum_jj^2\Delta_j\le0,
       \qquad
       \sum_j\frac{\Delta_j}{j+1}\le0.
      \]
\item If \(\rho>0\) and
      \(\lambda_{j+1}\le\rho\lambda_j\) for \(0\le j<d\), then
      \(t\le\rho\).
\end{enumerate}
\end{lemma}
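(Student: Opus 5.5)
We prove the five parts in order. Parts (b)--(e) all come from one sign pattern: the difference \(\Delta\) is nonpositive, then positive on an interval, then nonpositive. Its zeroth and first moments vanish.

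\emph{Part (a).} Let \(\mu_t\) be the probability distribution on \(\{0,\dots,d\}\) proportional to \(\omega_jt^j\). Then \(M(t)\) is the mean of \(\mu_t\). Differentiating gives \(tM'(t)=\operatorname{Var}(\mu_t)\), which is strictly positive because \(d\ge1\) and every \(\omega_j>0\). Hence \(M\) is strictly increasing. Moreover \(M(t)\to0\) as \(t\to0^+\) and \(M(t)\to d\) as \(t\to\infty\).

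It remains to check \(0<\bar\jmath<d\). Since \(e\ge1\) we have \(\pi_1>0\), so \(\bar\jmath>0\). Also \(\bar\jmath\le e\le d\). Equality \(\bar\jmath=d\) would force the \(\pi\)-mass to sit at \(d\), which is impossible since \(\pi_0>0\). Continuity and strict monotonicity then give a unique \(t\) with \(M(t)=\bar\jmath\).

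\emph{Part (b).} The two vanishing sums hold by construction. The \(\pi\) and \(\widetilde\pi\) have equal totals by the choice of \(\kappa\). They have equal means because \(M(t)=\bar\jmath\).

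For the interval claim, note that \(\Delta_j>0\) if and only if \(\lambda_j/(\kappa t^j)>1\). Dividing a log-concave sequence with interval support by the geometric sequence \(\kappa t^j\) keeps it log-concave with the same support. Lemma~\ref{lem:superlevel-interval} with \(C=1\) then shows that \(\{j:\Delta_j>0\}\) is an interval.

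\emph{Parts (c) and (d).} Write \(P=\{j:\Delta_j>0\}\). If \(P=\emptyset\), then \(\Delta\le0\) and \(\sum_j\Delta_j=0\) force \(\Delta\equiv0\), and both (c) and (d) are trivial. So let \(P=[a,b]\).

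\emph{(c).} Suppose \(\Delta_0>0\), so \(a=0\). Then \(\Delta_j\le0\) for all \(j>b\), and \(\Delta\not\equiv0\). Consider
\[
 \sum_j\bigl(j-b-\tfrac12\bigr)\Delta_j .
\]
This equals \(\sum_j j\Delta_j-(b+\tfrac12)\sum_j\Delta_j=0\). On the other hand, every summand is nonpositive: for \(j\le b\) the factor \(j-b-\tfrac12\) is negative and \(\Delta_j>0\); for \(j>b\) the factor is positive and \(\Delta_j\le0\). The summand at \(j=0\) is strictly negative, so the sum is strictly negative. This contradiction gives \(\pi_0\le\widetilde\pi_0\).

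\emph{(d).} Let \(f\) be convex on \([0,\infty)\); we will use \(f(j)=j^2\) and \(f(j)=1/(j+1)\). Let \(\ell\) be the affine function agreeing with \(f\) at \(a\) and \(b\); if \(a=b\), take \(\ell\) to be a supporting line at \(a\). By convexity, \(f-\ell\le0\) on \([a,b]\) and \(f-\ell\ge0\) at every integer outside \([a,b]\).

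The two vanishing moments let us subtract \(\ell\) freely:
\[
 \sum_j f(j)\Delta_j=\sum_j\bigl(f(j)-\ell(j)\bigr)\Delta_j .
\]
Every term on the right is \(\le0\): inside \(P\) the first factor is \(\le0\) and \(\Delta_j>0\); outside \(P\) the first factor is \(\ge0\) and \(\Delta_j\le0\). This gives both inequalities in (d).

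\emph{Part (e).} Suppose, for contradiction, that \(t>\rho\). On the support \(\{0,\dots,e\}\) the ratio \(\pi_j/\widetilde\pi_j=\lambda_j/(\kappa t^j)\) satisfies
\[
 \frac{\pi_{j+1}/\widetilde\pi_{j+1}}{\pi_j/\widetilde\pi_j}\le\frac{\rho}{t}<1 ,
\]
and the ratio is \(0\) beyond \(e\). So the likelihood ratio of the normalized \(\pi\) with respect to the normalized \(\widetilde\pi\) is nonincreasing, and it is strictly decreasing at the step \(j=0\to1\) because \(e\ge1\).

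Then the normalized \(\pi\) is stochastically dominated by the normalized \(\widetilde\pi\), and it is not equal to it. Its mean is therefore strictly smaller, which contradicts \(\bar\jmath=M(t)\). Alternatively, \(\{j:\Delta_j>0\}\) is now an initial segment containing \(0\), which contradicts (c).

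The main point requiring care is the convexity/chord argument in (d), specifically handling the endpoints of \(P\) and the degenerate cases. Everything else reduces to the sign pattern of \(\Delta\) established in (b) and (c).
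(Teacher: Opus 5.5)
Your proof is correct. Parts (a)--(c) follow the paper's argument. Your variance formula for $tM'(t)$ is the paper's double sum $\sum_{i<j}(j-i)^2\omega_i\omega_jt^{i+j}$ divided by $(\sum_j\omega_jt^j)^2$. In (c), your test function $j-b-\tfrac12$ replaces the paper's $j-s$ and reaches the contradiction slightly more directly.

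In (d) you turn the paper's two explicit identities into one convexity principle. The paper writes $\sum_j j^2\Delta_j=\sum_j\Delta_j(j-r)(j-s)$ and splits $1/(j+1)$ by partial fractions. Those correction terms are exactly $f-\ell$ for your chord $\ell$. Your version covers every convex test function at once, while the paper's is purely algebraic.

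The real difference is in (e). The paper argues directly: since $\lambda_j\rho^{-j}$ is nonincreasing, a Chebyshev-type sum identity gives $\bar\jmath\le M(\rho)$. Strict monotonicity of $M$ then gives $t\le\rho$, with no strictness needed in that inequality. You instead argue by contradiction at $t$ itself. That is why you need the strict drop of $\lambda_j/(\kappa t^j)$ from $j=0$ to $j=1$, which comes from $e\ge1$.

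Your second variant, contradicting (c), is the most economical, since it reuses sign information you already have. It needs one added line showing $0\in\{j:\Delta_j>0\}$. The ratio $\pi_j/\widetilde\pi_j$ is nonincreasing and nonconstant, while $\sum_j\widetilde\pi_j(\pi_j/\widetilde\pi_j-1)=0$ with every $\widetilde\pi_j>0$. So the ratio must exceed $1$ at $j=0$.
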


\begin{proof}
For part~\textup{(a)}, direct differentiation gives
\[
 tM'(t)
 =
 \frac{
 \displaystyle
 \sum_{0\le i<j\le d}
 (j-i)^2\omega_i\omega_jt^{i+j}}
 {\left(\sum_{j=0}^d\omega_jt^j\right)^2}
 >0.
\]
Moreover,
\[
 \lim_{t\to0^+}M(t)=0,
 \qquad
 \lim_{t\to\infty}M(t)=d.
\]
Since \(\pi_0,\pi_1>0\), we have \(0<\bar\jmath<d\), proving
part~\textup{(a)}.

For part~\textup{(b)}, take
\[
 \kappa:=
 \frac{\sum_j\pi_j}{\sum_j\omega_jt^j}.
\]
The equality \(\sum_j\Delta_j=0\) follows immediately.  Since
\(M(t)=\bar\jmath\), we also have
\[
 \sum_jj\widetilde\pi_j
 =M(t)\sum_j\widetilde\pi_j
 =\bar\jmath\sum_j\pi_j
 =\sum_jj\pi_j,
\]
and hence \(\sum_jj\Delta_j=0\).  Finally,
\[
 \Delta_j>0
 \quad\Longleftrightarrow\quad
 \lambda_jt^{-j}>\kappa.
\]
The sequence \((\lambda_jt^{-j})\) is log-concave with interval support,
so Lemma~\ref{lem:superlevel-interval} proves the last assertion.

If every \(\Delta_j\) vanishes, parts~\textup{(c)} and \textup{(d)} are
immediate.  Otherwise, the identities in part~\textup{(b)} show that
both signs occur.  Write
\[
 \{j:\Delta_j>0\}=\{r,r+1,\ldots,s\}.
\]
If \(r=0\), then \(\Delta_j(j-s)\le0\) for every \(j\), whereas
\[
 \sum_j\Delta_j(j-s)=0.
\]
This forces \(\Delta_j=0\) for \(j\ne s\), and then
\(\sum_j\Delta_j=0\) also gives \(\Delta_s=0\), a contradiction.  Thus
\(r\ge1\), so \(\Delta_0\le0\), proving part~\textup{(c)}.

For every \(j\),
\[
 \Delta_j(j-r)(j-s)\le0.
\]
Using the two identities in part~\textup{(b)}, we obtain
\[
 \sum_jj^2\Delta_j
 =
 \sum_j\Delta_j(j-r)(j-s)
 \le0.
\]
Also,
\[
 \frac1{j+1}
 =
 \frac{r+s+1-j}{(r+1)(s+1)}
 +
 \frac{(j-r)(j-s)}
 {(r+1)(s+1)(j+1)}.
\]
The first term on the right contributes zero after multiplication by
\(\Delta_j\) and summation.  The second contributes a nonpositive sum,
which proves part~\textup{(d)}.

For part~\textup{(e)}, put
\[
 a_j:=\lambda_j\rho^{-j},
 \qquad
 b_j:=\omega_j\rho^j.
\]
The sequence \((a_j)\) is nonincreasing.  Therefore
\[
 \bar\jmath-M(\rho)
 =
 \frac{
 \displaystyle
 \sum_{0\le i<j\le d}
 (j-i)b_ib_j(a_j-a_i)}
 {\left(\sum_jb_j\right)\left(\sum_jb_ja_j\right)}
 \le0.
\]
Since \(M(t)=\bar\jmath\) and \(M\) is strictly increasing, \(t\le\rho\).
\end{proof}

\subsection{Parity polynomials}

For \(k\ge0\), define
\begin{equation}\label{eq:parity-polynomials}
 \mathcal E_k(x):=\sum_{j\ge0}\binom{k}{2j}x^{2j},
 \qquad
 \mathcal O_k(x):=\sum_{j\ge0}\binom{k}{2j+1}x^{2j+1}.
\end{equation}
Equivalently,
\[
 \mathcal E_k(x)
 =\frac{(1+x)^k+(1-x)^k}{2},
 \qquad
 \mathcal O_k(x)
 =\frac{(1+x)^k-(1-x)^k}{2}.
\]

\begin{lemma}\label{lem:parity-identities}
For \(k\ge2\),
\begin{align}
 \mathcal E_{k-1}(x)^2
 -\mathcal O_{k-2}(x)\mathcal O_k(x)
 &=(1-x^2)^{k-2},\label{eq:parity-first}\\
 \mathcal E_{k-1}(x)\mathcal E_{k+1}(x)
 -\mathcal O_k(x)^2
 &=(1-x^2)^{k-1}.\label{eq:parity-second}
\end{align}
\end{lemma}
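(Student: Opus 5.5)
We work with the closed forms
\[
 \mathcal E_k(x)=\tfrac12\bigl((1+x)^k+(1-x)^k\bigr),\qquad
 \mathcal O_k(x)=\tfrac12\bigl((1+x)^k-(1-x)^k\bigr).
\]
Put \(A:=1+x\) and \(B:=1-x\), so that \(AB=1-x^2\).  Both identities then become polynomial identities in \(A\) and \(B\), and we verify them by expanding products of the form \((A^a\pm B^a)(A^b\pm B^b)\).

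For \eqref{eq:parity-first}, we compute
\[
 4\,\mathcal E_{k-1}^2=A^{2k-2}+2A^{k-1}B^{k-1}+B^{2k-2},
\]
\[
 4\,\mathcal O_{k-2}\mathcal O_k=(A^{k-2}-B^{k-2})(A^k-B^k)
 =A^{2k-2}-A^{k-2}B^k-A^kB^{k-2}+B^{2k-2}.
\]
The pure powers cancel in the difference, leaving
\[
 2A^{k-1}B^{k-1}+A^{k-2}B^{k-2}(A^2+B^2).
\]
We then use \(A^2+B^2=2+2x^2\) and \(AB=1-x^2\).  This gives \(2AB+A^2+B^2=(A+B)^2=4\), so the difference equals \(4(AB)^{k-2}\), which proves \eqref{eq:parity-first} after dividing by \(4\).

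For \eqref{eq:parity-second}, we compute similarly
\[
 4\,\mathcal E_{k-1}\mathcal E_{k+1}=A^{2k}+A^{k-1}B^{k+1}+A^{k+1}B^{k-1}+B^{2k},
\]
\[
 4\,\mathcal O_k^2=A^{2k}-2A^kB^k+B^{2k}.
\]
The difference is
\[
 (AB)^{k-1}\bigl(B^2+A^2+2AB\bigr)=(AB)^{k-1}(A+B)^2=4(AB)^{k-1},
\]
which gives \eqref{eq:parity-second}.  The hypothesis \(k\ge2\) is needed only so that every exponent, in particular \(k-2\), is nonnegative; this keeps the expressions polynomial and makes the definitions of \(\mathcal O_{k-2}\) and \(\mathcal E_{k-1}\) agree with \eqref{eq:parity-polynomials}.

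There is no real obstacle here.  The only point requiring care is the sign bookkeeping in the cross terms, and each cross term collapses to \((A+B)^2=4\) times a power of \(AB\).
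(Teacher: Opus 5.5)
Your proposal is correct and follows the paper's proof: the paper also substitutes \(X=1+x\), \(Y=1-x\) into the closed forms and observes that four times the two left-hand sides equal \((XY)^{k-2}(X+Y)^2\) and \((XY)^{k-1}(X+Y)^2\). Your explicit expansions simply fill in that verification, and your remark that \(k\ge2\) is needed only so that \(\mathcal O_{k-2}\) is defined is accurate.
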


\begin{proof}
Put \(X=1+x\) and \(Y=1-x\).  The two left-hand sides, multiplied by
\(4\), are respectively
\[
 (XY)^{k-2}(X+Y)^2
 \quad\text{and}\quad
 (XY)^{k-1}(X+Y)^2.
\]
Since \(XY=1-x^2\) and \(X+Y=2\), the result follows.
\end{proof}

\begin{lemma}\label{lem:reference-sums}
For \(x>0\) and \(L=N-1\),
\begin{align*}
 \sum_{\substack{0\le u\le N\\u\ {\rm odd}}}
 u\binom Nu x^u
 &=Nx\mathcal E_{N-1}(x),\\
 \sum_{\substack{0\le u\le N\\u\ {\rm odd}}}
 u(u-1)\binom Nu x^u
 &=N(N-1)x^2\mathcal O_{N-2}(x),\\
 \sum_{\substack{0\le v\le L\\v\ {\rm odd}}}
 v\binom Lv x^v
 &=Lx\mathcal E_{L-1}(x),\\
 \sum_{\substack{0\le v\le L\\v\ {\rm odd}}}
 \frac1{v+1}\binom Lv x^v
 &=\frac{\mathcal E_{L+1}(x)-1}{(L+1)x}.
\end{align*}
\end{lemma}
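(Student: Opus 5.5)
All four identities come from differentiating the parity polynomials of \eqref{eq:parity-polynomials}. The tools are the standard binomial relations
\[
 u\binom Nu=N\binom{N-1}{u-1},
 \qquad
 u(u-1)\binom Nu=N(N-1)\binom{N-2}{u-2},
 \qquad
 \frac1{v+1}\binom Lv=\frac1{L+1}\binom{L+1}{v+1}.
\]
In each case the parity restriction on \(u\) or \(v\) becomes a parity restriction on the shifted index, and that shifted parity picks out \(\mathcal E\) or \(\mathcal O\).

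For the first identity, write the sum as \(N\sum_{u\ \mathrm{odd}}\binom{N-1}{u-1}x^u\). Set \(u-1=2j\); this sum is \(Nx\sum_j\binom{N-1}{2j}x^{2j}=Nx\,\mathcal E_{N-1}(x)\). The terms with \(u>N\) vanish by the binomial convention, so the range \(0\le u\le N\) causes no trouble.

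For the second identity, \(u\) odd forces \(u-2\) odd. With \(u-2=2j+1\) the sum equals
\[
 N(N-1)x^2\sum_j\binom{N-2}{2j+1}x^{2j+1}=N(N-1)x^2\,\mathcal O_{N-2}(x).
\]
The third identity is the first one with \(N\) replaced by \(L\).

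For the fourth identity, the sum becomes \(\frac1{L+1}\sum_{v\ \mathrm{odd}}\binom{L+1}{v+1}x^v\). Put \(w=v+1\), which ranges over the even integers with \(2\le w\le L+1\). The sum is then
\[
 \frac1{(L+1)x}\sum_{\substack{w\ \mathrm{even}\\ w\ge2}}\binom{L+1}{w}x^w=\frac{\mathcal E_{L+1}(x)-1}{(L+1)x},
\]
because the only even index missing is \(w=0\), which contributes exactly \(1\).

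There is no genuine obstacle. The only care needed is bookkeeping: checking that the shifted index ranges match the full parity sums, and that the \(w=0\) term is subtracted in the last identity. Equivalently, one can note that \(\sum_{u\ \mathrm{odd}}\binom Nu x^u=\mathcal O_N(x)\) and apply \(x\frac{d}{dx}\) and \(x^2\frac{d^2}{dx^2}\), using \(\mathcal O_N'=N\mathcal E_{N-1}\) and \(\mathcal E_{N-1}'=(N-1)\mathcal O_{N-2}\). For the last identity one integrates \(\mathcal O_L\), using \(\int_0^x\mathcal O_L=(\mathcal E_{L+1}(x)-1)/(L+1)\), and then divides by \(x\).
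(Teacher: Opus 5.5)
Your proof is correct and is essentially the paper's argument. The paper writes the first two sums as $x\mathcal O_N'(x)$ and $x^2\mathcal O_N''(x)$ and the last as $x^{-1}\int_0^x\mathcal O_L(s)\,ds$, which is the calculus route you mention at the end; your absorption identities $u\binom Nu=N\binom{N-1}{u-1}$, etc., are just the coefficientwise form of those derivative and integral relations.
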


\begin{proof}
By \eqref{eq:parity-polynomials},
\[
 x\mathcal O_N'(x)=Nx\mathcal E_{N-1}(x),
 \qquad
 x^2\mathcal O_N''(x)
 =N(N-1)x^2\mathcal O_{N-2}(x).
\]
The third identity follows in the same way.  Finally,
\[
 \sum_{\substack{0\le v\le L\\v\ {\rm odd}}}
 \frac1{v+1}\binom Lv x^v
 =
 \frac1x\int_0^x\mathcal O_L(s)\,ds
 =
 \frac{\mathcal E_{L+1}(x)-1}{(L+1)x}.
\]
\end{proof}

\subsection{Case by case analysis for \(m\) of \(T_m(W)\)}

\subsubsection{\(m=2h\) with \(h\ge1\)}

\begin{proposition}\label{prop:even-centres}
Let \(q,g,W\) be nonnegative integers, and let \(h\ge1\) be an integer.
Then
\[
 T_{2h}(W)^2\ge T_{2h-1}(W)T_{2h+1}(W).
\]
\end{proposition}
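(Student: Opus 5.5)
The plan is to expand the difference $T_{2h}(W)^2-T_{2h-1}(W)T_{2h+1}(W)$ directly as a quadratic form in $\alpha_r=c_{W+r}$. We then use two facts: the binomial row $\binom qk$ is log-concave, and $(\alpha_r)$ is log-concave, which is \eqref{eq:alpha-ratio}. Unlike the odd-centre case, no hypothesis on $g$ should be needed. I have not verified the key sign claim in the second paragraph, so that step is the main risk.

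\emph{Trivial cases.} If $g=0$, then $\alpha_r=0$ for $r\ge1$ whenever $W\ge1$, or the row collapses to $\binom q{2h}$ and $\binom q{2h\pm1}$. In either case the inequality is $\binom q{2h}^2\ge\binom q{2h-1}\binom q{2h+1}$ or $0\ge0$. So assume $g\ge1$, so that all $\alpha_r>0$.

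\emph{Setting up the quadratic form.} Index by $j=h-r$ and write
\[
 T_{2h}=\sum_{i=0}^{h}\binom q{2i}\alpha_{h-i},\qquad
 T_{2h\pm1}=\sum_{i=0}^{h}\binom q{2i\pm1}\alpha_{h-i}.
\]
Then
\[
 T_{2h}^2-T_{2h-1}T_{2h+1}=\sum_{s=0}^{2h}\ \sum_{i+j=s}\alpha_{h-i}\alpha_{h-j}D(i,j),
\]
where, after symmetrising in $(i,j)$,
\[
 D(i,j)=\binom q{2i}\binom q{2j}-\tfrac12\Bigl(\binom q{2i-1}\binom q{2j+1}+\binom q{2j-1}\binom q{2i+1}\Bigr).
\]
The inner sum runs over $\max(0,s-h)\le i,j\le\min(h,s)$.

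\emph{Reduction to partial sums.} Fix $s$. By log-concavity of $(\alpha_r)$, the product $P_d:=\alpha_{h-i}\alpha_{h-j}$ with $|i-j|=d$ is nonincreasing in $d$. By Abel summation, it therefore suffices to show that the cumulative sums
\[
 \sum_{|i-j|\le d,\ i+j=s}D(i,j)
\]
are nonnegative for every $d$ up to the truncation. The $d=0$ term is $\binom q{2i}^2-\binom q{2i-1}\binom q{2i+1}\ge0$.

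\emph{Main obstacle.} The hard step is controlling these partial sums, because the truncation $i,j\le h$ destroys the clean generating-function evaluation. I would first try a telescoping argument. Pair $\binom q{2i}\binom q{2j}$ against $\binom q{2i-1}\binom q{2j+1}$ and use the ratio monotonicity of binomials, namely $\binom q{k}\binom q{l}\ge\binom q{k-1}\binom q{l+1}$ for $k\le l$. This should show that the negative cross-terms at distance $d$ are dominated by the positive terms at distance $d-1$. This is the same mechanism as the interlacing/Cauchy–Binet proof that convolutions of log-concave sequences are log-concave.

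\emph{Fallback.} If the telescoping fails, I would instead run the moment argument of Lemma~\ref{lem:crossing} with weights $\omega_u=\binom Nu$ on even-shifted indices. I would compare against the geometric reference $\widetilde\pi$ and evaluate the reference via Lemma~\ref{lem:reference-sums}, where Lemma~\ref{lem:parity-identities} gives a positive factor $(1-x^2)^{k-1}$. I would then use part~(d) of Lemma~\ref{lem:crossing} to show that the deviation only helps in \eqref{eq:moment-goal}.
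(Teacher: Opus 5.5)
\textbf{There is a genuine gap: your main route cannot work.} The Abel-summation step is logically fine, but the sufficient condition it reduces to is false.

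It already fails on the antidiagonal $s=1$. For $h\ge1$ that antidiagonal consists only of $(0,1)$ and $(1,0)$, so it has the single distance $d=1$. There
\[
 D(0,1)+D(1,0)=2\binom q2-q^2=-q<0 .
\]
So this antidiagonal contributes $-q\,\alpha_h\alpha_{h-1}<0$ in its entirety.

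More generally, when $s\le h$ the truncation $i,j\le h$ is invisible. Then $\sum_{i+j=s}D(i,j)$ is the coefficient of $x^{2s}$ in $\mathcal E_q(x)^2-\mathcal O_q(x)^2=(1-x^2)^q$, namely $(-1)^s\binom qs$. Hence every odd antidiagonal with $s\le\min(h,q)$ is negative in total.

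The case $q=2$, $h=1$ shows what is really going on. Here
\[
 T_2^2-T_1T_3=\alpha_1^2+\alpha_0^2-2\alpha_0\alpha_1=(\alpha_1-\alpha_0)^2 .
\]
The negative term comes from $s=1$ and is compensated only by the antidiagonals $s=0,2$. Positivity is therefore a cross-antidiagonal phenomenon. No argument that controls each antidiagonal separately can succeed, whether Abel summation or the termwise pairing of $\binom q{2i}\binom q{2j}$ against $\binom q{2i-1}\binom q{2j+1}$.

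The appeal to log-concavity of convolutions does not apply either. $T_m(W)$ is the coefficient of $z^m$ in $(1+z)^q\sum_r\alpha_rz^{2r}$, and the interleaved sequence $(\alpha_0,0,\alpha_1,0,\dots)$ has internal zeros. Indeed $(T_m(0))_m$ fails log-concavity at $m=1$ when $g>\binom{q+1}2$, so a parity-blind mechanism would prove too much.

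\textbf{Your fallback is exactly the paper's proof, but as written it is only a pointer.} The specifics you would need to supply are these.
\begin{itemize}
\item With $m=2h$, only \emph{odd} $u$ occur in $w_u$. So take $\omega_j=\binom N{2j+1}$ and $\lambda_j=\alpha_{h-j}$, with $\lambda_j=0$ for $j>h$. This sequence is log-concave with support $\{0,\dots,\min(h,\lfloor (N-1)/2\rfloor)\}$.
\item The step you omit is Lemma~\ref{lem:crossing}(e) with $\rho=1$. It applies because $(\alpha_r)$ is nondecreasing, so $(\lambda_j)$ is nonincreasing, and it gives $x=\sqrt t\le1$.
\item That bound is indispensable. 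The reference computation uses \eqref{eq:parity-first} with $k=N$, i.e.\ $\mathcal E_{N-1}^2-\mathcal O_N\mathcal O_{N-2}=(1-x^2)^{N-2}$. This yields $(N-1)\widetilde V^2-N\widetilde Z\widetilde\Sigma=\kappa_{\mathrm e}^2N^2(N-1)x^2(1-x^2)^{q}$, which would be negative for odd $q$ if $x>1$.
\item Parts (b) and (d) then give $Z=\widetilde Z$, $V=\widetilde V$ and $\Sigma-\widetilde\Sigma=4\sum_jj^2\Delta_j\le0$. Proposition~\ref{prop:master-identity} finishes the argument.
\item The degenerate cases must be checked by hand. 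When $q=0$ the support is a single point, Lemma~\ref{lem:crossing} does not apply, and both odd-indexed $T$'s vanish. The paper also treats $q=1$ and $g=0$ directly.
\end{itemize}
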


\begin{proof}
First suppose that \(q\ge2\) and \(g\ge1\).  Only odd \(u\) occur in
\eqref{eq:w-definition}.  Put
\[
 d:=\left\lfloor\frac{N-1}{2}\right\rfloor,
 \qquad
 \lambda_j:=
 \begin{cases}
  \alpha_{h-j},&0\le j\le h,\\
  0,&j>h.
 \end{cases}
\]
Then
\[
 w_{2j+1}=\binom N{2j+1}\lambda_j
 \qquad(0\le j\le d).
\]
The positive support of \((\lambda_j)_{0\le j\le d}\) is
\(\{0,\ldots,\min(h,d)\}\), whose endpoint is at least \(1\).  For
\(0\le j<\min(h,d)\),
\[
 \frac{\lambda_{j+1}}{\lambda_j}
 =
 \frac{W+h-j}{W+h-j+g-1}
 \le1
\]
and these quotients are nonincreasing.  If \(h<d\), then
\(\lambda_{h+1}=0<\lambda_h\), and all subsequent terms vanish.  Thus
\((\lambda_j)\) is log-concave and
\(\lambda_{j+1}\le\lambda_j\) for \(0\le j<d\).

Apply Lemma~\ref{lem:crossing} with
\[
 \omega_j=\binom N{2j+1}.
\]
Part~\textup{(e)} gives \(0<t\le1\).  Put \(x:=\sqrt t\), and define
\[
 \widetilde w_u:=
 \begin{cases}
  \displaystyle\kappa_{\mathrm e}\binom Nu x^u,&u\ {\rm odd},\\
  0,&u\ {\rm even},
 \end{cases}
 \qquad
 \kappa_{\mathrm e}:=\frac{\kappa}{x}>0,
\]
so that \(\widetilde w_{2j+1}=\widetilde\pi_j\).  Let
\(\widetilde Z,\widetilde V,\widetilde\Sigma\) be defined from
\(\widetilde w\) as in \eqref{eq:moment-sums}.  With
\(\Delta_j=w_{2j+1}-\widetilde w_{2j+1}\), parts~\textup{(b)} and
\textup{(d)} give
\[
\begin{aligned}
 Z-\widetilde Z
 &=\sum_j\Delta_j=0,\\
 V-\widetilde V
 &=\sum_j(2j+1)\Delta_j=0,\\
 \Sigma-\widetilde\Sigma
 &=\sum_j(2j+1)(2j)\Delta_j
 =4\sum_jj^2\Delta_j\le0.
\end{aligned}
\]

Lemma~\ref{lem:reference-sums} and
\eqref{eq:parity-polynomials} give
\[
\begin{aligned}
 \widetilde Z
 &=\kappa_{\mathrm e}\mathcal O_N(x),\\
 \widetilde V
 &=\kappa_{\mathrm e}Nx\mathcal E_{N-1}(x),\\
 \widetilde\Sigma
 &=\kappa_{\mathrm e}N(N-1)x^2\mathcal O_{N-2}(x).
\end{aligned}
\]
Hence, by \eqref{eq:parity-first},
\[
\begin{aligned}
 &(N-1)\widetilde V^2
 -N\widetilde Z\widetilde\Sigma\\
 &\qquad=
 \kappa_{\mathrm e}^2N^2(N-1)x^2
 \bigl(
 \mathcal E_{N-1}(x)^2
 -\mathcal O_N(x)\mathcal O_{N-2}(x)
 \bigr)\\
 &\qquad=
 \kappa_{\mathrm e}^2N^2(N-1)x^2(1-x^2)^{N-2}
 \ge0.
\end{aligned}
\]
Since \(Z=\widetilde Z>0\), \(V=\widetilde V\), and
\(\Sigma\le\widetilde\Sigma\), it follows that
\[
 (N-1)V^2-NZ\Sigma\ge0.
\]
Proposition~\ref{prop:master-identity} now proves the desired inequality.

It remains to treat the other values of \(q\) and \(g\).  If \(g=0\),
then \(T_m(W)=0\) for \(W>0\), while
\(T_m(0)=\binom qm\); the result follows from log-concavity of the
binomial coefficients.  If \(q=0\) and \(g\ge1\), then
\(T_{2h-1}(W)=T_{2h+1}(W)=0\).  Finally, if \(q=1\) and \(g\ge1\), then
\[
 \bigl(T_{2h-1}(W),T_{2h}(W),T_{2h+1}(W)\bigr)
 =
 \bigl(c_{W+h-1},c_{W+h},c_{W+h}\bigr),
\]
and the result follows because \((c_j)\) is nondecreasing.
\end{proof}

\subsubsection{\(m=1\)}

\begin{lemma}\label{lem:m-one}
Assume \eqref{eq:tail-standing-assumptions} and \(g\le\Bc\).  Then, for
every nonnegative integer \(W\),
\[
 T_1(W)^2-T_0(W)T_2(W)
 =
 \alpha_0^2
 \frac{(\Bc-g)+(\Bc-1)W}{W+1}
 \ge0.
\]
\end{lemma}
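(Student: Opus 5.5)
Since the statement is an explicit identity, I will compute the three shifted sums directly from \eqref{eq:T-definition} and then simplify using the ratio \eqref{eq:alpha-ratio}. With \(m\in\{0,1,2\}\), only \(r\le1\) can contribute. The sums are
\[
 T_0(W)=\binom q0c_W=\alpha_0,\qquad
 T_1(W)=\binom q1c_W=q\alpha_0,\qquad
 T_2(W)=\binom q2\alpha_0+\binom q0\alpha_1=\binom q2\alpha_0+\alpha_1 .
\]
Therefore
\[
 T_1(W)^2-T_0(W)T_2(W)=\alpha_0^2\Bigl(q^2-\binom q2\Bigr)-\alpha_0\alpha_1 .
\]

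Next I eliminate \(\alpha_1\) using \eqref{eq:alpha-ratio} at \(r=0\). This gives \(\alpha_1=\alpha_0\,(g+W)/(W+1)\). I also use \(q^2-\binom q2=\binom{q+1}{2}=\Bc\), since \(q^2-q(q-1)/2=q(q+1)/2\). The difference becomes
\[
 \alpha_0^2\Bigl(\Bc-\frac{g+W}{W+1}\Bigr)
 =\alpha_0^2\,\frac{\Bc(W+1)-g-W}{W+1}
 =\alpha_0^2\,\frac{(\Bc-g)+(\Bc-1)W}{W+1},
\]
which is the claimed identity. Note that \(g\ge1\) ensures \(\alpha_0=c_W>0\) and that the ratio formula is valid.

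For the inequality, \(\Bc-g\ge0\) holds by hypothesis. The standing assumption \(q\ge2\) gives \(\Bc\ge3\), so \(\Bc-1\ge0\). With \(W\ge0\) and \(\alpha_0^2>0\), the right-hand side is nonnegative. (It would suffice that \(\Bc\ge g\ge1\).)

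There is no real obstacle here. The only point requiring care is the bookkeeping of which terms survive the convention \(\binom qk=0\) for \(k<0\), in particular that \(T_2(W)\) picks up exactly the single extra term \(c_{W+1}\).
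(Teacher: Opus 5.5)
Your proposal is correct and follows the paper's proof essentially verbatim: you compute \(T_0(W)=\alpha_0\), \(T_1(W)=q\alpha_0\), \(T_2(W)=\binom q2\alpha_0+\alpha_1\), use \(q^2-\binom q2=\Bc\) and \(\alpha_1/\alpha_0=(g+W)/(W+1)\), and obtain nonnegativity from \(g\le\Bc\), \(\Bc\ge3\), and \(W\ge0\). Your side remark that \(\Bc\ge g\ge1\) already suffices for \(\Bc-1\ge0\) is also correct.
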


\begin{proof}
From \eqref{eq:T-definition},
\[
 T_0(W)=\alpha_0,\qquad
 T_1(W)=q\alpha_0,\qquad
 T_2(W)=\binom q2\alpha_0+\alpha_1.
\]
Since \(q^2-\binom q2=\Bc\) and
\(\alpha_1/\alpha_0=(g+W)/(W+1)\), the displayed identity follows.
Its right-hand side is nonnegative because \(g\le\Bc\),
\(\Bc\ge3\), and \(W\ge0\).
\end{proof}

\subsubsection{\(m=2h-1\) with \(h\ge2\)}

\begin{lemma}\label{lem:E-lower-bound}
Let \(L\ge3\), \(0<y<1\), and \(x=\sqrt y\).  Then
\begin{equation}\label{eq:E-lower-bound}
 \mathcal E_{L-1}(x)
 \ge
 (1-y)^{L-3}\bigl(1+(\Bc-2)y\bigr),
 \qquad
 \Bc=\binom L2.
\end{equation}
\end{lemma}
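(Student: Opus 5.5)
The plan is to divide both sides of \eqref{eq:E-lower-bound} by the positive factor \((1-y)^{L-3}\) and to compare power series in \(y\) coefficientwise. Put \(k:=L-1\ge2\). Since \(\mathcal E_k(x)\) contains only even powers of \(x\), we can write
\[
 \mathcal E_{L-1}(x)=\sum_{j\ge0}\binom{k}{2j}y^j,
\]
which is a polynomial in \(y\) with nonnegative coefficients. Its constant term is \(1\) and its linear coefficient is \(\binom k2\).

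Next, for \(0<y<1\) we use the expansion
\[
 (1-y)^{-(k-2)}=\sum_{i\ge0}\binom{k-3+i}{i}y^i,
\]
valid because \(k-2=L-3\ge0\). In the degenerate case \(k=2\) this is simply \(1\). All coefficients here are nonnegative, the constant term is \(1\), and the linear coefficient is \(k-2\). Hence the convergent series
\[
 F(y):=\frac{\mathcal E_{L-1}(x)}{(1-y)^{L-3}}
\]
is a product of two series with nonnegative coefficients. Its constant term is \(1\) and its linear coefficient is \(\binom k2+(k-2)\).

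The key check is that this linear coefficient equals \(\Bc-2\). Indeed,
\[
 \Bc-2=\binom{k+1}{2}-2=\binom k2+k-2 .
\]
Since every coefficient of \(F\) is nonnegative and \(y>0\), we may drop all terms of degree at least \(2\). This gives \(F(y)\ge1+(\Bc-2)y\). Multiplying back by \((1-y)^{L-3}>0\) yields \eqref{eq:E-lower-bound}.

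There is no real obstacle. The only points to watch are the exact match of the linear coefficients, the nonnegativity of the exponent \(L-3\) (this is where \(L\ge3\) is used), and convergence of the binomial series for \(0<y<1\).
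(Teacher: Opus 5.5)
Your proof is correct and is essentially the paper's argument. Both rest on the identity $\Bc-2=\binom{L-1}{2}+(L-3)$, the truncation $\mathcal E_{L-1}(x)\ge 1+\binom{L-1}{2}y$, and the Bernoulli-type bound $(1-y)^{L-3}\bigl(1+(L-3)y\bigr)\le 1$; you derive that bound, and discard the cross terms, from the nonnegative coefficients of $(1-y)^{-(L-3)}$, whereas the paper proves it by differentiation and applies $(1-y)^{L-3}\le 1$ to the remaining term.
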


\begin{proof}
Put \(k=L-3\).  The function
\[
 (1-y)^k(1+ky)
\]
is at most \(1\) for \(0\le y<1\); for \(k\ge1\), this follows by
differentiation, and for \(k=0\) it is an equality.  Since
\[
 \Bc-2=\binom{L-1}{2}+L-3,
\]
we obtain
\begin{align*}
 &(1-y)^{L-3}\bigl(1+(\Bc-2)y\bigr)\\
 &\quad=
 (1-y)^k(1+ky)
 +\binom{L-1}{2}y(1-y)^k\\
 &\quad\le
 1+\binom{L-1}{2}y
 \le\mathcal E_{L-1}(x).
\end{align*}
\end{proof}

\begin{proposition}\label{prop:odd-centres}
Assume \eqref{eq:tail-standing-assumptions} and \(g\le\Bc\).  Then
\eqref{eq:universal-tail} holds for every odd \(m\ge3\).
\end{proposition}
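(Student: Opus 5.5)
The plan is to run the moment method of Proposition~\ref{prop:even-centres} again. The new feature is that the weights now live on even \(u\), including \(u=0\). Write \(m=2h-1\) with \(h\ge2\). Then \(u\equiv m+1=2h\) is even, and \eqref{eq:w-definition} gives
\[
 w_{2j}=\binom N{2j}\alpha_{h-j}\qquad(0\le j\le h).
\]
The term \(u=0\) contributes \(\alpha_h\) to \(Z\) and nothing to \(V\) or \(\Sigma\). For \(u\ge1\) I will pass to \(v=u-1\), which is odd, and use \(u\binom Nu=N\binom Lv\) with \(L=N-1\).

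Put \(\lambda_j:=\alpha_{h-1-j}\) for \(0\le j\le h-1\) and \(\lambda_j:=0\) beyond, and set \(\omega_j:=\binom L{2j+1}\) and \(\pi_j=\omega_j\lambda_j\). Then the three moments become
\[
 V=N\sum_j\pi_j,\qquad
 \Sigma=N\sum_j(2j+1)\pi_j,\qquad
 Z=\alpha_h+N\sum_j\frac{\pi_j}{2j+2}.
\]
Exactly as in the even case, \((\lambda_j)\) is log-concave, nonincreasing, and has interval support starting at \(0\) of length at least \(2\). So Lemma~\ref{lem:crossing} applies. Part~(e) with \(\rho=1\) gives \(t\le1\); put \(x=\sqrt t\) and \(y=t\). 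Replacing \(\pi\) by \(\widetilde\pi\) leaves \(\sum\pi_j\) and \(\sum(2j+1)\pi_j\) unchanged by part~(b). By part~(d), \(\sum\pi_j/(j+1)\) can only increase under this replacement. Hence \(Z\le\widetilde Z\), where \(\widetilde Z\) keeps the same \(\alpha_h\), and it suffices to prove \eqref{eq:moment-goal} with \(\widetilde\pi\) in place of \(\pi\).

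The \(\widetilde\pi\)-sums are evaluated by Lemma~\ref{lem:reference-sums}, with \(\widetilde\pi_j=\kappa'\binom L{v}x^{v}\) and \(v=2j+1\). This gives \(\kappa'\mathcal O_L(x)\) for the plain sum, \(\kappa'Lx\,\mathcal E_{L-1}(x)\) for the sum weighted by \(v\), and \(\kappa'(\mathcal E_{L+1}(x)-1)/((L+1)x)\) for the sum weighted by \(1/(v+1)\). The goal then reads, after cancelling \(N^2\kappa'^2\) and a factor \(Lx\) (using \(L+1=N\)),
\[
 \mathcal O_L(x)^2\ \ge\ \Bigl(\frac{\alpha_h}{N\kappa'}x+\frac{\mathcal E_{N}(x)-1}{N}\Bigr)\mathcal E_{L-1}(x).
\]
Identity \eqref{eq:parity-second} with \(k=L\) rewrites \(\mathcal O_L^2=\mathcal E_{L-1}\mathcal E_{L+1}-(1-x^2)^{L-1}\). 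Call the corresponding difference of the two sides the slack. After this rewriting, the slack involves \(\tfrac{N-1}{N}\mathcal E_{L-1}(\mathcal E_{N}-1)\), plus an added \(\mathcal E_{L-1}/N\), minus \((1-y)^{L-1}\), minus the \(\alpha_h\) term.

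To control \(\alpha_h\), I will use part~(c): \(\pi_0\le\widetilde\pi_0\), i.e.\ \(\alpha_{h-1}\le\kappa'x\). I will also use \eqref{eq:alpha-ratio}: \(\alpha_h/\alpha_{h-1}=1+(g-1)/(W+h)\le (g+1)/2\), since \(h\ge2\). Therefore
\[
 \frac{\alpha_h x}{\kappa'}\le\frac{g+1}{2}\,y\le\frac{\Bc+1}{2}\,y,
\]
and this is the only place where \(g\le\Bc\) enters. What remains is a one-variable inequality on \(0<y\le1\). It should follow from the lower bound \(\mathcal E_{L-1}(x)\ge(1-y)^{L-3}(1+(\Bc-2)y)\) of Lemma~\ref{lem:E-lower-bound}, together with \(\mathcal E_{L+1}(x)\ge1+\binom{L+1}{2}y\) and \(L\ge3\). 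The main obstacle is this final elementary inequality. The \(\alpha_h\) term and the \(-(1-y)^{L-1}\) term must be absorbed by the positive part, especially near \(y\to0\), where both sides are of order \(y\). I would first check the linear terms in \(y\) exactly; this is where the constant \(\Bc\) should appear sharply. I would then handle higher orders via the factor \((1-y)^{L-3}\) from Lemma~\ref{lem:E-lower-bound}. If the crude bound \((g+1)/2\) is too lossy, I would keep \(W+h\) explicit, which is the fallback.
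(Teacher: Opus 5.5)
There is a genuine gap, and it sits in the step you flag as the main obstacle.

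\textbf{The reduction contains a slip.} With your normalization, the \(1/(v+1)\)-sum equals \(\kappa'(\mathcal E_N(x)-1)/(Nx)\). Hence \(Z=\alpha_h+\kappa'(\mathcal E_N(x)-1)/x\), and dividing \((N-1)V^2\ge NZ\Sigma\) by \(N^2L\kappa'^2\) gives
\[
 \mathcal O_L(x)^2\ge\Bigl(\frac{\alpha_hx}{\kappa'}+\mathcal E_N(x)-1\Bigr)\mathcal E_{L-1}(x),
\]
with no factors \(1/N\). After applying \eqref{eq:parity-second}, the products \(\mathcal E_{L-1}\mathcal E_N\) cancel exactly. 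So there is no large positive part left to absorb anything, and the goal is precisely
\[
 1-\frac{(1-y)^{L-1}}{\mathcal E_{L-1}(x)}\ge\frac{\alpha_hx}{\kappa'},
\]
which is \eqref{eq:odd-star}.

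\textbf{With only \(t\le1\), this cannot be closed.} Put \(A=W+h-1\ge1\). Your part~(c) bound gives \(\alpha_hx/\kappa'\le(\alpha_h/\alpha_{h-1})y=\frac{A+g}{A+1}\,y\). At \(y=1\) the left side equals \(1\), since \(L\ge3\). The right side is \(\frac{A+g}{A+1}>1\) whenever \(g\ge2\), and a fortiori \(\frac{g+1}{2}>1\). So the one-variable inequality you would need on \(0<y\le1\) is false near \(y=1\). Keeping \(W+h\) explicit, your stated fallback, does not rescue it.

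\textbf{The missing idea.} Apply part~(e) of Lemma~\ref{lem:crossing} with \(\rho=y_0:=A/(A+g-1)\) instead of \(\rho=1\). Here \(y_0\) is the largest of the quotients \(\lambda_{j+1}/\lambda_j=(A-j)/(A-j+g-1)\). This gives \(t\le y_0\), which is strictly less than \(1\) as soon as \(g\ge2\). It also couples the two competing effects: a large \(g\) makes \(\alpha_h/\alpha_{h-1}\) large but forces \(t\) to be small.

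Then Lemma~\ref{lem:E-lower-bound} bounds the left side below by \(yF(y)\), where \(F(z)=(\Bc-z)/(1+(\Bc-2)z)\) is decreasing. Hence \(F(y)\ge F(y_0)\), and
\[
 F(y_0)-\frac{A+g}{A+1}=\frac{(g-1)(\Bc-g)}{(A+1)\bigl((\Bc-1)A+g-1\bigr)}\ge0 .
\]
This is where \(g\le\Bc\) genuinely enters, and it enters sharply at \(y=y_0\). It does not enter through the linear terms as \(y\to0\): there the left side is about \(\Bc y\), and there is room to spare because \(\frac{A+g}{A+1}\le\frac{g+1}{2}<\Bc\). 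In your plan the hypothesis is used only through \(\frac{g+1}{2}\le\frac{\Bc+1}{2}\). That is a sign that this coupling between \(t\) and \(g\) was lost.
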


\begin{proof}
Write \(m=2h-1\) with \(h\ge2\), and fix a nonnegative integer \(W\).
Only even \(u\)
occur in \eqref{eq:w-definition}, and
\[
 w_{2j}=\binom N{2j}\alpha_{h-j}
 \qquad
 \left(0\le j\le\min\left\{h,\left\lfloor\frac N2\right\rfloor\right\}\right).
\]
For every odd \(v=2j+1\) with \(1\le v\le L\), define
\[
 P_v:=(v+1)w_{v+1}
 =
 \begin{cases}
  \displaystyle
  N\binom Lv\alpha_{h-1-j},&0\le j\le h-1,\\[5pt]
  0,&j>h-1.
 \end{cases}
\]
In the remainder of the proof, every sum over \(v\) is over the odd
integers \(1\le v\le L\).  From \eqref{eq:moment-sums},
\begin{equation}\label{eq:odd-dictionary}
 V=\sum_vP_v,\qquad
 \Sigma=\sum_vvP_v,\qquad
 Z=w_0+\sum_v\frac{P_v}{v+1}.
\end{equation}
Since \(P_1=NL\alpha_{h-1}>0\), we have \(\Sigma>0\).

Put
\[
 A:=W+h-1\ge1.
\]
Then
\begin{equation}\label{eq:theta-definition}
 \theta:=\frac{w_0}{P_1}
 =\frac{\alpha_h}{NL\alpha_{h-1}}
 =\frac{A+g}{NL(A+1)},
 \qquad
 w_0=\theta P_1.
\end{equation}
Since \(N-1=L\), condition \eqref{eq:moment-goal} is equivalent to
\begin{equation}\label{eq:odd-goal}
 \theta P_1+\sum_v\frac{P_v}{v+1}
 \le
 \frac{L\left(\sum_vP_v\right)^2}
 {N\sum_vvP_v}.
\end{equation}

Let
\[
 d:=\left\lfloor\frac{L-1}{2}\right\rfloor,
 \qquad
 \omega_j:=N\binom L{2j+1},
\]
\[
 \lambda_j:=
 \begin{cases}
  \alpha_{h-1-j},&0\le j\le h-1,\\
  0,&j>h-1.
 \end{cases}
\]
Then \(P_{2j+1}=\omega_j\lambda_j\).  Put
\[
 e:=\min(h-1,d).
\]
The positive support of \((\lambda_j)_{0\le j\le d}\) is
\(\{0,\ldots,e\}\), and \(e\ge1\).  For \(0\le j<e\),
\[
 \frac{\lambda_{j+1}}{\lambda_j}
 =
 \frac{A-j}{A-j+g-1}.
\]
These quotients are nonincreasing.  Their largest value is
\[
 y_0:=\frac{A}{A+g-1},
 \qquad
 0<y_0\le1.
\]
If \(e<d\), then \(\lambda_{e+1}=0<\lambda_e\), and all subsequent
terms vanish.  Hence \((\lambda_j)\) is log-concave with interval support,
and
\(\lambda_{j+1}\le y_0\lambda_j\) for \(0\le j<d\).

Apply Lemma~\ref{lem:crossing}.  Part~\textup{(e)} gives
\[
 0<t\le y_0.
\]
Put
\[
 y:=t,\qquad x:=\sqrt t,
\]
and put
\[
 \widetilde P_v:=\kappa_{\mathrm o}\binom Lv x^v
 \qquad(v\ {\rm odd}),
 \qquad
 \kappa_{\mathrm o}:=\frac{\kappa N}{x}>0,
\]
so that \(\widetilde P_{2j+1}=\widetilde\pi_j\).
Parts~\textup{(b)},
\textup{(c)}, and \textup{(d)} give
\begin{equation}\label{eq:odd-comparison-data}
\begin{aligned}
 \sum_v\widetilde P_v
 &=\sum_vP_v,\\
 \sum_vv\widetilde P_v
 &=\sum_vvP_v,\\
 P_1&\le\widetilde P_1,\\
 \sum_v\frac{P_v}{v+1}
 &\le
 \sum_v\frac{\widetilde P_v}{v+1}.
\end{aligned}
\end{equation}
Here the last inequality uses \(v+1=2(j+1)\).
Because \(\theta>0\), it is enough to prove
\[
 \theta\widetilde P_1
 +\sum_v\frac{\widetilde P_v}{v+1}
 \le
 \frac{L\left(\sum_v\widetilde P_v\right)^2}
 {N\sum_vv\widetilde P_v}.
\]

By Lemma~\ref{lem:reference-sums} and
\eqref{eq:parity-polynomials},
\[
\begin{aligned}
 \sum_v\widetilde P_v
 &=\kappa_{\mathrm o}\mathcal O_L(x),\\
 \sum_vv\widetilde P_v
 &=\kappa_{\mathrm o}Lx\mathcal E_{L-1}(x),\\
 \widetilde P_1
 &=\kappa_{\mathrm o}Lx,\\
 \sum_v\frac{\widetilde P_v}{v+1}
 &=\kappa_{\mathrm o}
 \frac{\mathcal E_{L+1}(x)-1}{Nx}.
\end{aligned}
\]
After substitution and cancellation, the required inequality becomes
\[
 \theta LNx^2+\mathcal E_{L+1}(x)-1
 \le
 \frac{\mathcal O_L(x)^2}{\mathcal E_{L-1}(x)}.
\]
By \eqref{eq:parity-second},
\[
 \frac{\mathcal O_L(x)^2}{\mathcal E_{L-1}(x)}
 =
 \mathcal E_{L+1}(x)
 -
 \frac{(1-x^2)^{L-1}}{\mathcal E_{L-1}(x)}.
\]
Thus it remains to prove
\begin{equation}\label{eq:odd-star}
 1-\frac{(1-y)^{L-1}}{\mathcal E_{L-1}(x)}
 \ge\theta LNy.
\end{equation}

If \(y=1\), then \(y\le y_0\le1\) gives \(y_0=1\), hence \(g=1\).
Equation~\eqref{eq:theta-definition} gives \(\theta LN=1\), and
\eqref{eq:odd-star} is an equality.

Assume \(0<y<1\).  Lemma~\ref{lem:E-lower-bound} gives
\[
\begin{aligned}
 1-\frac{(1-y)^{L-1}}{\mathcal E_{L-1}(x)}
 &\ge
 1-\frac{(1-y)^2}{1+(\Bc-2)y}\\
 &=\frac{y(\Bc-y)}{1+(\Bc-2)y}.
\end{aligned}
\]
Set
\[
 F(z):=\frac{\Bc-z}{1+(\Bc-2)z}
 \qquad(0\le z\le1).
\]
Since
\[
 F'(z)=-
 \frac{(\Bc-1)^2}
 {\bigl(1+(\Bc-2)z\bigr)^2}<0,
\]
and \(y\le y_0\), we have \(F(y)\ge F(y_0)\).  A direct calculation
using \(y_0=A/(A+g-1)\) gives
\[
 F(y_0)-\frac{A+g}{A+1}
 =
 \frac{(g-1)(\Bc-g)}
 {(A+1)\bigl((\Bc-1)A+g-1\bigr)}
 \ge0.
\]
Since \(\theta LN=(A+g)/(A+1)\), this proves
\eqref{eq:odd-star}.  It follows that \eqref{eq:odd-goal}, and hence
\eqref{eq:moment-goal}, holds.  Proposition~\ref{prop:master-identity}
now proves \eqref{eq:universal-tail} for every odd \(m\ge3\).
\end{proof}

\subsection{Proof of Theorem~\ref{thm:universal-tail}}

\begin{proof}
The necessity of \(g\le\binom{q+1}{2}\) was obtained from
\(W=0\), \(m=1\), immediately before the statement of
Theorem~\ref{thm:universal-tail}.

For sufficiency, assume \(g\le\binom{q+1}{2}\).  If \(g=0\), then
\[
 T_m(W)=
 \begin{cases}
  \binom qm,&W=0,\\
  0,&W>0,
 \end{cases}
\]
so \eqref{eq:universal-tail} follows from log-concavity of the binomial
coefficients.  This also includes \(q=0\), since the hypothesis forces
\(g=0\) in that case.

If \(q=1\) and \(g\ge1\), then the hypothesis forces \(g=1\).
Consequently \(c_j=1\) and \(T_m(W)=1\) for every \(m,W\ge0\), so the
inequality is an equality.

It remains to assume \(q\ge2\) and \(g\ge1\).  For even \(m\), use
Proposition~\ref{prop:even-centres}; for \(m=1\), use
Lemma~\ref{lem:m-one}; and for odd \(m\ge3\), use
Proposition~\ref{prop:odd-centres}.  This proves sufficiency.
\end{proof}

\subsection{Proof of Theorem~\ref{thm:main-hodge}}

\begin{proof}
Theorem~\ref{thm:universal-tail} and
Proposition~\ref{lem:tail-to-hodge} show that
\textup{(i)} implies \textup{(ii)}.  The implication
\[
 \textup{(ii)}\Longrightarrow\textup{(iii)}
\]
is immediate.

Finally, Lemma~\ref{lem:coefficient-formula} gives
\[
 a_0^{(2)}=1,\qquad
 a_1^{(2)}=q,\qquad
 a_2^{(2)}=\binom q2+g.
\]
Thus the inequality in \textup{(iii)} is equivalent to
\[
 q^2\ge\binom q2+g
 \quad\Longleftrightarrow\quad
 g\le\binom{q+1}{2},
\]
which is \textup{(i)}.
\end{proof}

\section{Proof of Theorem~\ref{thm:unimodality-intro}}
\label{sec:unimodality}

For the implication \textup{(i)}\(\Rightarrow\)\textup{(ii)}, the case
\(g=0\) follows from Remark~\ref{rem:g-zero}.  If
\[
 1\le g\le\binom{q+1}{2},
\]
then \(q\ge1\), every entry of the row is positive by
\eqref{eq:coefficient-formula}, and Theorem~\ref{thm:main-hodge} gives
log-concavity.  Hence the row is unimodal.  It therefore remains, for
this implication, to consider
\[
 g>\binom{q+1}{2}.
\]
Under condition~\textup{(i)}, this gives \(q\ge1\) and \(g\ge2\).

\subsection{The first differences}

Throughout this subsection, assume \(q\ge1\) and \(g\ge2\).  For every
integer \(j\), set
\begin{equation}\label{eq:e-definition}
 e_j:=c_j-c_{j-1}.
\end{equation}

\begin{lemma}\label{lem:e-properties}
We have \(e_j=0\) for \(j<0\), while
\begin{equation}\label{eq:e-generating}
 \sum_{j\ge0}e_jz^j=(1-z)^{1-g}.
\end{equation}
Moreover,
\begin{equation}\label{eq:e-binomial}
 e_j=\binom{g+j-2}{j}>0
 \qquad(j\ge0),
\end{equation}
and
\begin{equation}\label{eq:e-ratio}
 \frac{e_{j-1}}{e_j}
 =\frac{j}{g+j-2}
 \qquad(j\ge1).
\end{equation}
\end{lemma}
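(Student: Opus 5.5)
This is a direct generating-function computation from \eqref{eq:c-generating} and \eqref{eq:e-definition}; no earlier result beyond the definition of \(c_j\) is needed.

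First, the vanishing for \(j<0\) is immediate. If \(j<0\), then \(j-1<0\) as well, so both \(c_j\) and \(c_{j-1}\) vanish by the convention \(c_j=0\) for \(j<0\).

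Next comes the generating function \eqref{eq:e-generating}. I would multiply \(\sum_{j\ge0}c_jz^j=(1-z)^{-g}\) by \((1-z)\). The coefficient of \(z^j\) on the left becomes \(c_j-c_{j-1}=e_j\). The term \(j=0\) is consistent because \(c_{-1}=0\), so \(e_0=c_0\). This gives \(\sum_{j\ge0}e_jz^j=(1-z)^{1-g}\).

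For the closed form \eqref{eq:e-binomial}, I would use the standing assumption \(g\ge2\), so that \(1-g=-(g-1)\) with \(g-1\ge1\). The negative binomial expansion then gives
\[
 e_j=\binom{(g-1)+j-1}{j}=\binom{g+j-2}{j}.
\]
This is positive, since \(g+j-2\ge j\ge0\). As a check, one can also write \(c_j-c_{j-1}=\binom{g+j-1}{j}-\binom{g+j-2}{j-1}\) and apply Pascal's rule; this gives the same formula and covers \(j=0\) directly.

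Finally, for the ratio \eqref{eq:e-ratio}, take \(j\ge1\). Dividing the two binomial coefficients gives
\[
 \frac{e_{j-1}}{e_j}
 =\frac{\binom{g+j-3}{j-1}}{\binom{g+j-2}{j}}
 =\frac{(g+j-3)!\,j!\,(g-2)!}{(j-1)!\,(g-2)!\,(g+j-2)!}
 =\frac{j}{g+j-2}.
\]
The denominator \(g+j-2\) is at least \(1\), so the quotient is well defined. No step here is a real obstacle; the only care needed is the boundary index \(j=0\) and the use of \(g\ge2\), which keeps the exponent \(g-1\) positive.
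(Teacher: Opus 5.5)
Your proof is correct and follows the same route as the paper. The vanishing for \(j<0\) comes from the convention \(c_j=0\), the generating function comes from multiplying \(\sum_j c_jz^j=(1-z)^{-g}\) by \(1-z\), and the closed form and ratio come from expanding \((1-z)^{-(g-1)}\) under the standing assumption \(g\ge2\); you simply write out the binomial computations that the paper calls immediate.
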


\begin{proof}
The assertion for \(j<0\) follows from the convention \(c_j=0\) for
\(j<0\).  By \eqref{eq:c-generating},
\[
 \sum_{j\ge0}e_jz^j
 =(1-z)\sum_{j\ge0}c_jz^j
 =(1-z)^{1-g}.
\]
Equations~\eqref{eq:e-binomial} and \eqref{eq:e-ratio} follow
immediately.
\end{proof}

\begin{proposition}\label{prop:first-difference}
For \(n\ge0\) and \(0\le p\le2n\),
\begin{equation}\label{eq:first-difference}
 a_{p+1}^{(n)}-a_p^{(n)}
 =
 \sum_{j>p-n}
 \binom{q-1}{p+1-2j}e_j
 -
 \binom{q-1}{2n-p}c_{p-n}.
\end{equation}
\end{proposition}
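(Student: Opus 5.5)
The plan is to derive \eqref{eq:first-difference} directly from Lemma~\ref{lem:coefficient-formula}, using Pascal's rule to split each binomial coefficient $\binom qk$ into $\binom{q-1}{k}+\binom{q-1}{k-1}$. Then the two sums for $a_{p+1}^{(n)}$ and $a_p^{(n)}$ are compared term by term. No inequality is needed; this is a bookkeeping identity, valid under the standing assumption $q\ge1$ (so that $q-1\ge0$ and the binomial convention applies). The identity does not use $g\ge2$.

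\emph{Step 1: re-index by $j$.} In \eqref{eq:coefficient-formula} substitute $i=p-2j$. The parity condition disappears. The condition $i\le 2n-p$ becomes $j\ge p-n$, and the conditions $0\le i\le q$ are absorbed by the convention $\binom qk=0$ outside $0\le k\le q$. This gives
\[
 a_p^{(n)}=\sum_{j\ge p-n}\binom q{p-2j}c_j ,
\]
where terms with $j<0$ vanish because $c_j=0$ for $j<0$. I also check that this formula remains correct for $p=2n+1$: the right-hand side is $0=a_{2n+1}^{(n)}$, since every binomial index $2n+1-2j$ with $j\ge n+1$ is negative. This makes the statement valid at the endpoint $p=2n$.

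\emph{Step 2: apply Pascal's rule.} Write
\[
 \binom q{p+1-2j}=\binom{q-1}{p+1-2j}+\binom{q-1}{p-2j},
 \qquad
 \binom q{p-2j}=\binom{q-1}{p-2j}+\binom{q-1}{p-1-2j}.
\]
In the last piece, set $j'=j+1$, so that $\binom{q-1}{p-1-2j}=\binom{q-1}{p+1-2j'}$, with $j'\ge p-n+1$ and coefficient $c_{j'-1}$. Then
\[
 a_p^{(n)}=\sum_{j\ge p-n}\binom{q-1}{p-2j}c_j+\sum_{j> p-n}\binom{q-1}{p+1-2j}c_{j-1},
\]
and
\[
 a_{p+1}^{(n)}=\sum_{j> p-n}\binom{q-1}{p+1-2j}c_j+\sum_{j> p-n}\binom{q-1}{p-2j}c_j .
\]

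\emph{Step 3: subtract.} The sums involving $\binom{q-1}{p+1-2j}$ combine into
\[
 \sum_{j>p-n}\binom{q-1}{p+1-2j}(c_j-c_{j-1})=\sum_{j>p-n}\binom{q-1}{p+1-2j}e_j .
\]
The sums involving $\binom{q-1}{p-2j}$ cancel except for the single term $j=p-n$ present only in $a_p^{(n)}$. That term equals $\binom{q-1}{p-2(p-n)}c_{p-n}=\binom{q-1}{2n-p}c_{p-n}$ and enters with a minus sign. This is exactly \eqref{eq:first-difference}. When $p<n$ the subtracted term vanishes automatically because $c_{p-n}=0$, and $e_j=0$ for $j<0$ by Lemma~\ref{lem:e-properties}.

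The only real obstacle is the index bookkeeping. One must confirm that the lower limits $j\ge p-n$ and $j>p-n$ are exactly right after the shift $j\mapsto j+1$, and that boundary cases ($p<n$, $p=2n$, negative binomial indices) are covered by the stated conventions rather than needing separate treatment.
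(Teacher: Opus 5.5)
Your proof is correct, but it argues at the level of coefficients rather than generating functions.

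The paper multiplies G\"ottsche's series by $1-x$ and uses the single identity $(1-x)(1+xt)=(1-x^2t)-x(1-t)$. This splits the series into two pieces:
\begin{itemize}
\item $(1+xt)^{q-1}(1-t)^{-1}(1-x^2t)^{1-g}$, whose coefficients give the $e_j$-sum, with the lower limit $j>p-n$ coming from $k=n-p-1+j\ge0$;
\item $x(1+xt)^{q-1}(1-x^2t)^{-g}$, which has no factor $(1-t)^{-1}$ and so contributes only the single term $\binom{q-1}{2n-p}c_{p-n}$.
\end{itemize}

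You instead rewrite Lemma~\ref{lem:coefficient-formula} as $a_p^{(n)}=\sum_{j\ge p-n}\binom q{p-2j}c_j$, split by Pascal's rule, and telescope. The algebra is the same in disguise. Pascal's rule is the coefficient form of $(1+xt)^q=(1+xt)^{q-1}(1+xt)$. The differences $c_j-c_{j-1}$ record multiplication by $1-x^2t$. The cancellation that leaves only the $j=p-n$ term corresponds to the factor $1-t$ cancelling $(1-t)^{-1}$.

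The trade-offs are these. The generating-function version gets the boundary value $a_{2n+1}^{(n)}=0$ and all index ranges automatically. You have to verify by hand that your re-indexed formula still holds at $p=2n+1$, which you did correctly, and track the shift $j\mapsto j+1$. In exchange, your argument is fully elementary and reuses Lemma~\ref{lem:coefficient-formula} directly. It also makes explicit that only $q\ge1$ is needed, for Pascal's rule under the binomial convention, and not $g\ge2$.
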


\begin{proof}
Multiplying \eqref{eq:master-series} by \(1-x\) and using
\[
 (1-x)(1+xt)=(1-x^2t)-x(1-t),
\]
we obtain
\begin{align*}
 &(1-x)\sum_{n\ge0}\sum_{p\ge0}a_p^{(n)}x^pt^n\\
 &\qquad=
 \frac{(1+xt)^{q-1}}{(1-t)(1-x^2t)^{g-1}}
 -
 \frac{x(1+xt)^{q-1}}{(1-x^2t)^g}.
\end{align*}
The coefficient of \(x^{p+1}t^n\) on the left is
\(a_{p+1}^{(n)}-a_p^{(n)}\).

By \eqref{eq:e-generating}, the first term on the right is
\[
 \left(\sum_{i=0}^{q-1}\binom{q-1}{i}x^it^i\right)
 \left(\sum_{j\ge0}e_jx^{2j}t^j\right)
 \left(\sum_{k\ge0}t^k\right).
\]
A term contributes to the coefficient of \(x^{p+1}t^n\) precisely when
\[
 i+2j=p+1,
 \qquad
 i+j+k=n.
\]
Thus
\[
 i=p+1-2j,
 \qquad
 k=n-p-1+j,
\]
and \(k\ge0\) is equivalent to \(j>p-n\).  Since \(e_j=0\) for
\(j<0\), the contribution is
\[
 \sum_{j>p-n}\binom{q-1}{p+1-2j}e_j.
\]

For the second term on the right, the corresponding equations are
\[
 i+2j=p,
 \qquad
 i+j=n.
\]
Their unique solution is
\[
 i=2n-p,
 \qquad
 j=p-n,
\]
so its contribution is
\[
 \binom{q-1}{2n-p}c_{p-n}.
\]
Subtracting the two contributions proves \eqref{eq:first-difference}.
\end{proof}

If \(n\ge1\) and \(0\le p\le n-1\), then \(c_{p-n}=0\), and every
\(j\ge0\) satisfies \(j>p-n\).  Hence
\begin{equation}\label{eq:left-half-increasing}
 a_{p+1}^{(n)}-a_p^{(n)}
 =
 \sum_{j\ge0}\binom{q-1}{p+1-2j}e_j
 \ge0.
\end{equation}
Consequently,
\[
 a_0^{(n)}\le a_1^{(n)}\le\cdots\le a_n^{(n)}.
\]

It remains to consider \(n\le p\le2n-1\).  Put
\[
 m:=2n-p,
\]
so that \(1\le m\le n\).  In \eqref{eq:first-difference}, set
\(j=n-m+t\).  Then \(j>p-n=n-m\) is equivalent to \(t\ge1\), and
\[
 p+1-2j=m+1-2t.
\]
For \(1\le m\le n\), define
\begin{equation}\label{eq:ND-definition}
 \mathcal N_m
 :=
 \sum_{t\ge1}
 \binom{q-1}{m+1-2t}e_{n-m+t},
 \qquad
 \mathcal D_m
 :=
 \binom{q-1}{m}c_{n-m}.
\end{equation}
Then
\begin{equation}\label{eq:tail-difference}
 a_{2n-m+1}^{(n)}-a_{2n-m}^{(n)}
 =\mathcal N_m-\mathcal D_m
 \qquad(1\le m\le n).
\end{equation}
Moreover, \(\mathcal D_m>0\) for
\(1\le m\le\min\{n,q-1\}\), whereas \(\mathcal D_m=0\) for
\(q\le m\le n\).

\begin{lemma}\label{lem:ND-ratio}
Assume \(n\ge 1,\;g\ge2\) and
\[
 1\le m<\min\{n,q-1\}.
\]
Then
\begin{equation}\label{eq:ND-ratio}
 \mathcal N_{m+1}\mathcal D_m
 \ge
 \mathcal N_m\mathcal D_{m+1}.
\end{equation}
\end{lemma}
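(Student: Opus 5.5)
The plan is a term-by-term comparison. By the hypothesis \(m+1\le\min\{n,q-1\}\), both \(\mathcal D_m\) and \(\mathcal D_{m+1}\) are positive, so \eqref{eq:ND-ratio} is equivalent to
\[
 \mathcal N_{m+1}\ge \rho\,\mathcal N_m,
 \qquad
 \rho:=\frac{\mathcal D_{m+1}}{\mathcal D_m}
 =\frac{q-1-m}{m+1}\cdot\frac{c_{n-m-1}}{c_{n-m}}
 =\frac{q-1-m}{m+1}\cdot\frac{J}{J+g-1},
\]
where \(J:=n-m\ge1\). The last equality uses \eqref{eq:c-ratio} with \(g\ge2\).

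Both \(\mathcal N_m\) and \(\mathcal N_{m+1}\) are sums over \(t\ge1\). I would pair the \(t\)-th terms:
\[
 \binom{q-1}{m+2-2t}e_{J-1+t}
 \quad\text{in }\mathcal N_{m+1},
 \qquad
 \binom{q-1}{m+1-2t}e_{J+t}
 \quad\text{in }\mathcal N_m .
\]
I will show that each term of \(\mathcal N_{m+1}\) is at least \(\rho\) times the paired term of \(\mathcal N_m\). Summing over \(t\) then gives the claim.

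Fix \(t\ge1\) and put \(i:=m+1-2t\). If \(i<0\), the \(\mathcal N_m\) term vanishes and there is nothing to prove, since all terms are nonnegative. Otherwise \(0\le i\le m-1<q-1\), so both \(\binom{q-1}{i}\) and \(\binom{q-1}{i+1}\) are positive, and the ratio of the binomial factors is
\[
 \frac{q-1-i}{i+1}=\frac{q-2-m+2t}{m+2-2t}\ge\frac{q-1-m}{m+1}.
\]
This holds because \(2t\ge1\) enlarges the numerator and \(2t\ge 1\) shrinks the denominator below \(m+1\).

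For the \(e\)-factors, \eqref{eq:e-ratio} with \(j=J+t\) gives
\[
 \frac{e_{J+t-1}}{e_{J+t}}=\frac{J+t}{J+t+g-2}.
\]
I need this to be at least \(J/(J+g-1)\). The expression is increasing in \(t\) because \(g\ge2\). At \(t=1\) it equals \((J+1)/(J+g-1)\), which is at least \(J/(J+g-1)\). Multiplying the two ratio bounds yields the termwise inequality.

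The only delicate point is the bookkeeping of zero terms. One must check that no term of \(\mathcal N_m\) is positive while its partner in \(\mathcal N_{m+1}\) vanishes. This is exactly where \(m<q-1\) enters: it rules out \(i=q-1\). The argument also uses \(J\ge1\), which is where \(m<n\) enters. Beyond that, everything reduces to the explicit ratio formulas \eqref{eq:c-ratio} and \eqref{eq:e-ratio}, so I do not expect a genuine obstacle.
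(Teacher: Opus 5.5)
Your proposal is correct and is essentially the paper's proof: you pair the $t$-th terms of $\mathcal N_{m+1}$ and $\mathcal N_m$, and you use \eqref{eq:c-ratio} and \eqref{eq:e-ratio} to show that each ratio is at least $\mathcal D_{m+1}/\mathcal D_m$. The only cosmetic difference is that the paper goes through the intermediate $t=1$ bound $\frac{q-m}{m}\cdot\frac{n-m+1}{g+n-m-1}$, whereas you compare the binomial factor directly with $\frac{q-1-m}{m+1}$.
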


\begin{proof}
In this range, \(\mathcal D_m\) and \(\mathcal D_{m+1}\) are positive,
and \eqref{eq:c-ratio} gives
\[
 \frac{\mathcal D_{m+1}}{\mathcal D_m}
 =
 \frac{q-1-m}{m+1}
 \frac{n-m}{g+n-m-1}.
\]

Consider a nonzero term of \(\mathcal N_m\) corresponding to \(t\ge1\).
The term of \(\mathcal N_{m+1}\) having the same value of \(t\) is also
nonzero.  Indeed,
\[
 0\le m+1-2t\le m-1,
 \qquad
 n-m+t\ge2,
\]
and hence
\[
 1\le m+2-2t\le m\le q-2,
 \qquad
 n-m-1+t\ge1.
\]
Therefore \eqref{eq:e-ratio} gives
\begin{align*}
 &\frac{\binom{q-1}{m+2-2t}e_{n-m-1+t}}
 {\binom{q-1}{m+1-2t}e_{n-m+t}}\\
 &\qquad=
 \frac{q-m-2+2t}{m+2-2t}
 \frac{n-m+t}{g+n-m+t-2}.
\end{align*}
Since \(t\ge1\),
\begin{align*}
 &\frac{q-m-2+2t}{m+2-2t}-\frac{q-m}{m}
 =
 \frac{2q(t-1)}{m(m+2-2t)}
 \ge0,\\
 &\frac{n-m+t}{g+n-m+t-2}
 -\frac{n-m+1}{g+n-m-1}
 =
 \frac{(g-2)(t-1)}
 {(g+n-m+t-2)(g+n-m-1)}
 \ge0.
\end{align*}
Therefore
\begin{align*}
 &\frac{\binom{q-1}{m+2-2t}e_{n-m-1+t}}
 {\binom{q-1}{m+1-2t}e_{n-m+t}}\\
 &\qquad\ge
 \frac{q-m}{m}
 \frac{n-m+1}{g+n-m-1}\\
 &\qquad\ge
 \frac{q-1-m}{m+1}
 \frac{n-m}{g+n-m-1}\\
 &\qquad=
 \frac{\mathcal D_{m+1}}{\mathcal D_m}.
\end{align*}
Here the second inequality follows from
\[
 \frac{q-m}{m}-\frac{q-1-m}{m+1}
 =\frac{q}{m(m+1)}>0
\]
and \(n-m+1\ge n-m\).

Thus every nonzero term of \(\mathcal N_m\) is matched with a term of
\(\mathcal N_{m+1}\) which is at least
\(\mathcal D_{m+1}/\mathcal D_m\) times as large.  Summing these
inequalities and adding any remaining nonnegative terms of
\(\mathcal N_{m+1}\), we obtain
\[
 \mathcal N_{m+1}
 \ge
 \frac{\mathcal D_{m+1}}{\mathcal D_m}\mathcal N_m.
\]
This proves \eqref{eq:ND-ratio}.
\end{proof}

\subsection{Proof of Theorem~\ref{thm:unimodality-intro}}

\begin{proof}
\textup{(i)}\(\Rightarrow\)\textup{(ii)}.
The cases \(g=0\) and
\(1\le g\le\binom{q+1}{2}\) were proved above.  Suppose now that
\[
 g>\binom{q+1}{2}.
\]
Condition~\textup{(i)} then gives \(q\ge1\), and consequently \(g\ge2\).

When \(n=0\), the row consists only of \(a_0^{(0)}\) and hence is
unimodal.  Let \(n\ge1\).
Equation~\eqref{eq:left-half-increasing} shows that
\[
 a_0^{(n)}\le a_1^{(n)}\le\cdots\le a_n^{(n)}.
\]

We claim that the integers \(m\in\{1,\ldots,n\}\) for which
\[
 \mathcal N_m\ge\mathcal D_m
\]
form a \emph{final segment} of \(\{1,\ldots,n\}\): if $m\in\{1,\ldots,n\}$ such that $\mathcal N_m\ge\mathcal D_m$ and
$m<m'\le n$, then $\mathcal N_{m'}\ge\mathcal D_{m'}$. 

Suppose that \(1\le m<n\)
and \(\mathcal N_m\ge\mathcal D_m\).  If \(m+1\ge q\), then
\(\mathcal D_{m+1}=0\), and hence
\(\mathcal N_{m+1}\ge\mathcal D_{m+1}\).  If \(m+1\le q-1\), then
Lemma~\ref{lem:ND-ratio} applies and gives
\[
 \mathcal N_{m+1}\mathcal D_m
 \ge
 \mathcal N_m\mathcal D_{m+1}
 \ge
 \mathcal D_m\mathcal D_{m+1}.
\]
Since \(\mathcal D_m>0\), it follows again that
\(\mathcal N_{m+1}\ge\mathcal D_{m+1}\).  This proves the claim.

By \eqref{eq:tail-difference}, the signs of the differences with
\(n\le p\le2n-1\) are determined by
\(\mathcal N_m-\mathcal D_m\), where \(m=2n-p\).  Since \(m\) decreases
as \(p\) increases, these differences are first nonnegative and then
negative, with at most one change; either part may be empty.  Together
with \eqref{eq:left-half-increasing}, this proves that
\[
 \left(a_p^{(n)}\right)_{p=0}^{2n}
 =
 \left(h^{p,0}(S^{[n]})\right)_{p=0}^{2n}
\]
is unimodal.  Thus \textup{(i)} implies \textup{(ii)}.

The implication \textup{(ii)}\(\Rightarrow\)\textup{(iii)} follows by
taking \(n=1\).

Finally, assume \textup{(iii)}.  Since \(S^{[1]}\cong S\), the row is
\[
 \left(h^{0,0}(S),h^{1,0}(S),h^{2,0}(S)\right)
 =(1,q,g).
\]
If \textup{(i)} does not hold, then \(q=0\) and \(g>0\), so this row is
\((1,0,g)\), which is not unimodal.  Therefore \textup{(iii)} implies
\textup{(i)}.
\end{proof}

\begin{remark}[Why \(q\ge1\) is necessary]\label{rem:q-zero-unimodality}
Suppose that \(q=0\) and \(g\ge1\).  In
\eqref{eq:coefficient-formula}, the binomial coefficient can be nonzero
only when \(i=0\).  Hence
\[
 a_{2j}^{(n)}=c_j>0
 \qquad(0\le j\le n),
\]
whereas
\[
 a_{2j+1}^{(n)}=0
 \qquad(0\le j<n).
\]
Thus, for \(n\ge1\), the row has a zero between positive terms and is
not unimodal.
\end{remark}

\section*{Acknowledgements}

The author wishes to thank the Department of Mathematics, IIT Madras, for excellent working conditions, and the Prime Minister's Research Fellowship (PMRF, ID: 2503482) for their financial support.

\end{document}